\renewcommand{\@biblabel}[1]{\quad#1.}
\date{}
\newtheorem{theorem}{Theorem}[section]
\newtheorem{lemma}[theorem]{Lemma}
\newtheorem{remark}{\it Remark\/}
\newcommand{\mF}{\mathcal{F}}
\newcommand{\R}{\mathbb{R}}
\newcommand{\suml}{\sum\limits}
\newcommand{\norm}[1]{\left \| #1 \right \|}
\newcommand{\SOS}[1]{\ifthenelse{\boolean{DisplaySOS}}{{\bf[#1]}}{}}
\begin{document}

% Title must be 150 characters or less
\begin{flushleft}
{\Large
\textbf{
Illusions in the Ring Model of visual orientation selectivity
% A new twist on the ring model of orientation perception
}
}
% Insert Author names, affiliations and corresponding author email.
\\
Romain Veltz$^{1,2}$, 
Olivier Faugeras$^{2}$, 
\\
\bf{1} IMAGINE/LIGM, Universit\'e Paris Est., France
\\
\bf{2} NeuroMathComp team, INRIA, CNRS, ENS Paris, France
\\
$\ast$ E-mail: romain.veltz@sophia.inria.fr
\end{flushleft}

% Please keep the abstract between 250 and 300 words
\section*{Abstract}

\SOS{Please keep the Author Summary between 150 and 200 words
 Use first person. PLoS ONE authors please skip this step. 
 Author Summary not valid for PLoS ONE submissions.}  
 
 \SOS{Il y a 270 mots...}
 The Ring Model of orientation tuning is a dynamical model of a hypercolumn of visual area V1 in the human neocortex that has been designed to account for the experimentally observed orientation tuning curves by local, i.e., cortico-cortical computations. The tuning curves are stationary, i.e. time independent, solutions of this dynamical model. One important assumption underlying the Ring Model is that the LGN input to V1 is weakly tuned to the retinal orientation and that it is the local computations in V1 that sharpen this tuning. Because the equations that describe the Ring Model have built-in equivariance properties in the synaptic weight distribution with respect to a particular group acting on the retinal orientation of the stimulus, the model in effect encodes an infinite number of tuning curves that are arbitrarily translated with respect to each other. By using the Orbit Space Reduction technique we rewrite the model equations in canonical form as functions of polynomials that are invariant with respect to the action of this group. This allows us to combine equivariant bifurcation theory with an efficient numerical continuation method in order to compute the tuning curves predicted by the Ring Model. Surprisingly some of these tuning curves are not tuned to the stimulus. We interpret them as neural illusions and show numerically how they can be induced by simple dynamical stimuli. These neural illusions are important biological predictions of the model. If they could be observed experimentally this would be a strong point in favor ot the Ring Model. We also show how our theoretical analysis allows to very simply specify the ranges of the model parameters by comparing the model predictions with published experimental observations.
\section*{Author Summary}

\SOS{TO DO}
The visual cortex of humans features a columnar organization. Each column contains cells whose receptive fields monitor almost identical retinal positions and orientations. Such nearby {\it orientation columns} encode different orientations. A set of orientation columns encoding all possible orientations is a hypercolumn of orientation. The visual input to a hypercolumn is from the retina through the thalamus. The hypercolumn is a relay that sharpens the output of the thalamus to make the local retinal visual orientation more "conspicuous". This happens because the neurons of the hypercolumn inhibit and excite each other according to simple rules. These rules are used to build a mathematical model of a hypercolumn whose predictions can be compared with experimental evidence. We study the properties of such a model whose rich symmetries can be traced to the fact that two orientations differing by the value $\pi$ are the same visually. Important consequences of our analysis are the clarification of the model parameters role in shaping the ``perception'' of the hypercolumn and the prediction of a neuronal illusion, the fact that a hypercolumn can be in the same state as if the retinal stimulus were at an orientation rotated by 90 degrees with respect to the actual one.

\section*{Introduction}\label{section:intro}
Since the discovery by Hubel and Wiesel \cite{hubel-wiesel:62} of the selective response of a single neuron to some orientations, a long standing debate has been the degree of cortical computation involved in this selectivity compared to the feedforward selectivity implied by the LGN projections. Cortical models  \cite{somers-nelson-etal:95,ben-yishai-bar-or-etal:95,hansel-sompolinsky:97} have been used to show how this selectivity can be produced in a cortex with center-surround interactions in the orientation domain.

The  Ring Model of orientation tuning was introduced by Hansel and Sompolinski \cite{hansel-sompolinsky:97} and studied by several other scientists \cite{shriki-hansel-etal:03,ermentrout:98,dayan-abbott:01,bressloff-bressloff-etal:00,bressloff-cowan-etal:01}, after the seminal work of Ben-Yishai and colleagues \cite{ben-yishai-bar-or-etal:95}, as a model of a hypercolumn in primary visual cortex. This rate model is a simplification of complex spiking networks \cite{somers-nelson-etal:95,douglas-koch-etal:95} designed to make it easier to understand the role of different mesoscopic parameters.

It assumes that the local orientation $x$ in the receptor fields of the neurons in the column is encoded in their activity, or firing rate, noted $A(x,t)$. The interaction between the neurons is modeled by a function $J$ of the orientation that represents how the activities corresponding to two different orientations reinforce or inhibit each other. This function is called the connectivity function of the model. With this in mind, the dynamics of the firing rate can be represented by the following integro-differential\SOS{ce n'est pas une equation integro-differentielle, il n y a pas de terme differentiel, c'est juste une ODE} equation in the line of the model of Wilson-Cowan \cite{wilson-cowan:72}:
\begin{equation} \label{eq:ringa}
\left\{
\begin{array}{lcl}
\tau\dot A(x,t)&=& -A(x,t) + 
S\left[ \lambda \Big(\int\limits_{-\pi/2}^{\pi/2}  J(x-y)
A(y,t))dy/\pi+\varepsilon I(x)-\theta\Big)\right] \quad t>0\\
A(x,0)&=&A_0(x)
\end{array}
\right.
\end{equation}
$\tau$ defines the intrinsic dynamics of the population and $I$ is the input from the lateral geniculate nucleus (LGN) to the hypercolumn whose contrast is defined by the parameter $\varepsilon$, see figure \ref{fig:visual path}. $S$ is the sigmoid
\[
 S(x)=\frac{1}{1+e^{- x}},
\]
which takes values between 0 and 1, $\lambda$ is a parameter that determines the nonlinear gain of the sigmoid, $\theta$ is a threshold that controls for which value the sigmoid takes the value $1/2$, the function $I$ represents the input from the LGN.
\begin{figure}
 \centerline{
\includegraphics[width=0.75\textwidth]{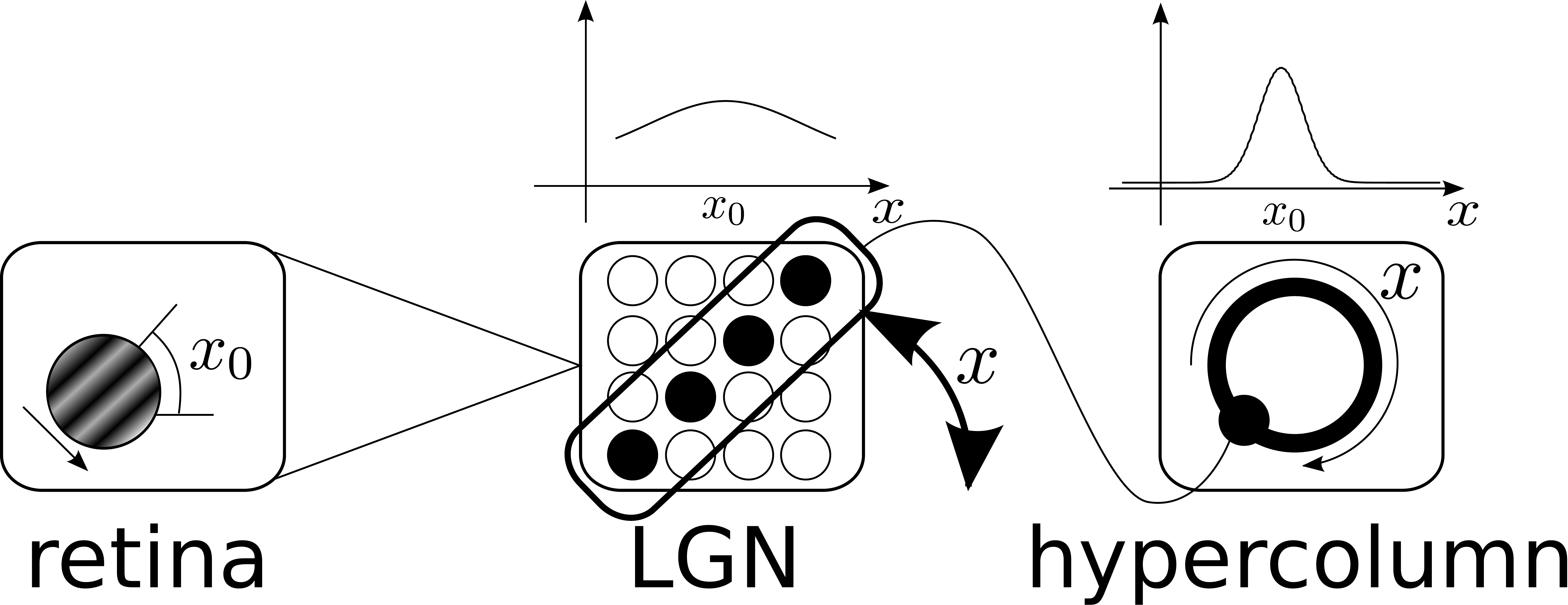}
}
\caption{A simplified view of the visual path from the retina through the LGN to cortical area V1. The receptive field of the LGN cells feeding the hypercolumn of orientation of V1 contains a grating of orientation $x_0$. This grating excites mostly the LGN cells that share this receptive field and are aligned in the direction $x_0$; the tuning is broad, see the curve in the middle part of the figure. These LGN cells project onto the network of cells in the hypercolumn of orientation of V1 whose interactions, represented by the Ring Model, result in a sharpening of the tuning around the grating direction, see the curve in the righthand side of the figure.}
\label{fig:visual path}
\end{figure}

Regarding the connectivity function, in all cases it is an even $\pi$-periodic function, positive for orientation values close to 0 (corresponding to an excitation) and negative for orientation values larger in magnitude (corresponding to an inhibition). The rich symmetries of $J$ play a prominent role in our upcoming analysis of the Ring Model. The reason for this is that, when the contrast $\varepsilon$ is equal to 0, equation \eqref{eq:ringa}, is equivariant, i.e. it has some nice properties with respect to the action of a certain group that we proceed to describe. 

Let us consider the group, noted $T$, of translations of $\R/2\pi \mathbb{Z}$. An element $T_\gamma$, $\gamma \in \R/2\pi \mathbb{Z}$ of this group acts on the orientation $x$ by $T_\gamma \cdot x=\gamma/2+x$ and on the activity function $A(x,t)$ by $T_\gamma \cdot A(x,t)=A(\gamma/2+x,t)$. Similarly, we consider the reflection, noted $R$, such that $R \cdot x=-x$ and  therefore $R \cdot A(x,t)=A(-x,t)$. We note $G$ the group generated by $T$ and $R$. We can abstractly rewrite equation \eqref{eq:ringa} as $\mF(x)=0$, where $\mF(x)=\tau\dot A(x,t)+A(x,t) - S\left[ \lambda \Big(\int\limits_{-\pi/2}^{\pi/2}  J(x-y)A(y,t))dy/\pi+\varepsilon I(x)-\theta\Big)\right]$. Using the symmetry properties of the function $J$ it is easy to verify that $\mF$ satisfies the condition $\mF(g \cdot x)=g \cdot \mF(x)$ for all elements $g$ in the group $G$ when $\varepsilon=0$. Since moreover we clearly have
\[
\left\{
 \begin{array}{lclclcl}
  T_{\gamma_1} T_{\gamma_2} & = & T_{\gamma_1+\gamma_2} & RT_\gamma & = & T_{-\gamma} R \quad \text{for all} \quad \gamma_1,\,\gamma_2,\,\gamma \in \R/2\pi \mathbb{Z}\\
T_0 &=& Id & R^2 &= & Id
 \end{array}
\right.
\]
the group G is in effect isomorphic to $O(2)$\SOS{l isomorphisme est evident non ? il suffit d associer a $T_\gamma$ la matrice de rotation 2x2 d'angle $\gamma$ et a R, la reflexion 2x2}, the group of two-dimensional othogonal transformations \cite[Chapter 1]{haragus-iooss:09}.

Several variants of this model have been studied in the literature, e.g., in \cite{bressloff-bressloff-etal:00,bressloff-cowan-etal:01} $J$ is a difference of Gaussians while in \cite{ben-yishai-bar-or-etal:95} the authors start with a network of excitatory/inhibitory spiking neurons and derive a meanfield approximation of this network yielding an interaction function $J$ described by the following equation:
\begin{equation}\label{eq:J}
   J(x)=  J_0+J_1 \cos(2x)\\
\end{equation}

\noindent
The input $I$ from the LGN has a similar shape
\begin{equation}\label{eq:I}
 I(x) =  1-\beta+\beta \cos(2(x-x_0)).
\end{equation}
As mentioned above, and as shown in figure \ref{fig:visual path}, it is weakly tuned, i.e., maximal,  at $x=x_0$ and it is the network, modeled by \eqref{eq:ringa},  that sharpens this tuning.
These authors vary the anisotropy parameter $\beta$ between 0 and 1.

The sigmoid $S$ is  often chosen to be a Heaviside function, or, as in \cite{ben-yishai-bar-or-etal:95}, a piecewise linear approximation of the sigmoid. In \cite{ermentrout:98,dayan-abbott:01,bressloff-bressloff-etal:00} it is a true sigmoidal function.
 
The parameter $J_1$ is positive, an important property of the network that is necessary to produce the tuning curves.\SOS{il y avait : as the functional connectivity featuring local excitation/lateral inhibition is preserved}. $J_0$ is most of the time negative  \cite{ben-yishai-bar-or-etal:95,dayan-abbott:01,bressloff-bressloff-etal:00,bressloff-cowan-etal:01} but can be positive as well  \cite{bressloff-cowan-etal:01}. Note that the $J_i$s, $i=0,\,1$ are the first Fourier coefficients of the function $J$. $J_0$ is its mean value and can be positive even if the surround is inhibitory.
For example, in \cite{dayan-abbott:01}, we find $J_0=-7.3,\ J_1=11,\ \beta=0.1,\ \theta=0$ which are the values in \cite{ben-yishai-bar-or-etal:95} except for $\theta=1$ and $\varepsilon=0.164$. The slope, or nonlinear gain, $\lambda$ is assumed to be equal to 1. Using the previous rescaling, it becomes $J_0=-1,\ J_1=1.5,\ \lambda=7.3/S'(0)=29.2$ and $\theta\to\theta/7.3 \approx 0.1$ in the case of \cite{ben-yishai-bar-or-etal:95} and $\theta=0$ in the case of \cite{dayan-abbott:01}.

The model \eqref{eq:ringa} is called an activity model in the terminology of \cite{ermentrout:98}. For technical reasons we turn it into a voltage model as follows.
We first rewrite  equation \eqref{eq:ringa} in a more compact and convenient, functional, form :
\[
 \tau\dot A=-A+S\left[ \lambda(J \cdot A+\varepsilon I-\theta)\right] .
\]
$J$ is now thought of as a linear (integral) operator acting on the function $A$ as the periodic convolution $J \cdot A(x,t)=\int\limits_{-\pi/2}^{\pi/2}  J(x-y)
A(y,t))dy/\pi$, see, e.g., \cite{veltz-faugeras:09}.
We then perform the change of variable $V=J \cdot A+ \varepsilon I-\theta$. Assuming that the input current is not a function of time, this leads to the following equation
\begin{equation}\label{eq:ringb}
   \tau\dot V=-V+J \cdot S(\lambda V)+\varepsilon I-\theta
 \end{equation}
Note that this equation, as \eqref{eq:ringa}, is $G$-equivariant.

The stationary solutions (some of them called tuning curves, see the paragraph \textit{Discussion}) of \eqref{eq:ringa} (respectively of \eqref{eq:ringb}) satisfy $\dot{A}=0$ (respectively $\dot{V}=0$). Characterizing and computing them for different values of the parameters is the first step toward understanding the dynamics of the solutions to these equations. Indeed, it is known that this type of equations is such that there are only heteroclinic (linking two stationary solutions, or equilibria) or unbounded orbits. Since we showed in \cite{veltz-faugeras:09} that all trajectories were bounded, this implies that they are made of heteroclinic orbits. This motivates further the study of the stationary solutions of \eqref{eq:ringb}. One of our goals is to show how the stationary solutions are organized and to give indications about the dynamics in a given range of parameters, corresponding to biologically plausible values. This  is relevant because many large scale models of V1 including many hypercolumns represent them with the Ring Model. Therefore a good understanding of one hypercolumn paves the way to an understanding of a population thereof. We show that, depending on the nonlinear gain $\lambda$, there may  exist many stationary solutions, which are all acceptable responses of the network to a given input from the LGN (at least for the model at hand). Thus this local orientation tuning device may behave less trivially than what it was initially designed for. In effect, the existence of these stationary solutions, sometimes called persistent cortical states, can make the local dynamics quite intricate when $\lambda$ is large enough to support the existence of these extra solutions.

\noindent
It is worth noticing that the stationary solutions of \eqref{eq:ringa} and \eqref{eq:ringb} are in one to one correspondence. As a consequence we will work on \eqref{eq:ringb} because it is mathematically more convenient.

\noindent
%  Some general properties of neural field equations similar to \eqref{eq:ringa} and \eqref{eq:ringb} can be found in \cite{veltz-faugeras:09}. 
We will follow a method similar to the one developed in \cite{veltz-faugeras:09} to compute the stationary states of \eqref{eq:ringb}.  The method has been modified to take into account the symmetries of the Ring Model.
The general idea is that the LGN input is weak and only modulates the network activity. Hence the cortical network (represented by the Ring Model) encodes the possible tuning curves within its connectivity function and when presented with a weak external input, produces small deviations of 'its' tuning curves. Our goal is to compute these tuning curves.
However, because of the symmetries of the connectivity function $J$, the model in effect encodes an infinite number of tuning curves, and this is an endless cause of numerical problems. Indeed we pointed out above that if the input current was null, equation \eqref{eq:ringb} (respectively \eqref{eq:ringa}) was $G$-equivariant. This implies that if $V(x)$ is a stationary solution of \eqref{eq:ringb}, so are $V(x+\gamma/2)$, $\gamma \in \R/2\pi \mathbb{Z}$ and $V(-x)$. We show that by performing an appropriate change of variables, we can get rid of this redundancy and recover numerical accuracy. 
% In \cite{veltz-faugeras:09}, we have found some neural illusions when all the symmetry are broken. We aim at a more systematic study in this paper. 
\section*{Materials and Methods}
\subsection*{Turning the problem into a finite dimensional one}\label{section:finite}
Problem \eqref{eq:ringa} (respectively \eqref{eq:ringb}) is infinite dimensional since the solutions live in some (unspecified, but a priori infinite dimensional) functional space. By truncating the Fourier series of the even $\pi$-periodic connectivity function $J$ we reduce the problem to a finite number of dimensions. We write
\[
 J(x) = J_0+\suml_{p=1}^NJ_p\ \cos(2px)
\]
where $N$ is the number of Fourier modes that are sufficient to represent $J$. We will show how the choice of $N$ affects the biological functional properties of the Ring Model. Notice that by varying $N$, we generate a family of models that contains all previously published ones. 
\begin{remark}
 For convenience, we shall write $\cos_k$ for the function $x\to \cos(kx)$. The same holds for $\sin_k$.
\end{remark}
It was shown in \cite{veltz-faugeras:09} that this form of the connectivity function implies that the solutions to \eqref{eq:ringb} can be written $V(x,t)=V^\parallel(x,t)+V^\bot(x,t)$, where $V^\parallel$ is a linear combination of the functions $\cos_{2p}$ and $\sin_{2p}$, $p=0,\cdots,N$ and the function $V^\bot$ tends to 0 exponentially fast when $t \to \infty$. This implies that the stationary solutions satisfy $V^\bot=0$.
\begin{remark}
 The spectrum of the integral operator $J$ associated with the eponymic connectivity function is readily seen to be equal to $\Sigma( J)=\left\lbrace J_0,\frac{J_p}{2} \right\rbrace_{1\leq p\leq N} $.
\end{remark}

\noindent
We can, up to a rescaling of $\lambda$ in \eqref{eq:ringa}, assume that $J_0$ takes the values $\pm1$:
\[
J_0\overset{\rm def}{=}\varepsilon_0\in\left\lbrace-1,1 \right\rbrace.
\]
Similarly we define $\varepsilon_k=\pm 1$ by
\[
 J_k=\varepsilon_k |J_k|, \, \varepsilon_k \in\left\lbrace-1,1 \right\rbrace \quad k=1,\cdots,N
\]
With all this in hands, the tuning curves satisfy the equation:
\begin{equation}
\label{eq:ringbtc}
 V(x) =  \int\limits_{-\pi/2}^{\pi/2}  \left[J_0+\suml_{p=1}^NJ_p\ \cos_{2p}(x-y)\right]
S\left[ \lambda V(y,t))\right]dy/\pi+\varepsilon I(x)-\theta 
\end{equation}
% Then, by expanding the cosines\footnote{$cos(x-y)=cos(x)cos(y)+sin(x)sin(y)$}, it follows that any stationary solution of \eqref{eq:ringb} is given by the formula : 
It follows that any stationary solution to \eqref{eq:ringb} can be written
\[
 V(x)=v_0+\suml_{p=1}^N\sqrt{|J_p|}\left[v_p^{(1)}\ \cos_{2p}(x)+ v_p^{(2)}\ \sin_{2p}(x)\right],
\]
where  $v_0,v_p^{(1),\,(2)}$, $p=1,\cdots,N$ are $2N+1$ reals. Solving \eqref{eq:ringbtc} is therefore equivalent to finding these reals. In the case of a general solution, $V^\parallel(x,t)$ is given by the same formula where the coefficients are now real functions of time.
Under the assumption that $V^\bot$ is neglected, it is easy to obtain the system of ordinary differential equations that are satisfied by the functions $v_0,v_p^{(1),\,(2)}$. Using the complex values  $z_k\overset{\rm def}{\equiv} v_k^{(1)}+iv_k^{(2)}$, $k=1,\cdots,N$ we obtain the following equations
% \SOS{Olivier: Les $I_k$ n'ont pas été définis->OK}:

\begin{equation}\label{eq:discrete}
\left\lbrace 
\begin{array}{lcl}
 \dot v_0+v_0&=&\frac{\varepsilon_0}{\pi}\int\limits_{-\frac{\pi}{2}}^\frac{\pi}{2}S\left[ \lambda v_0+\lambda\suml_{p=1}^N\sqrt{|J_p|}\Re\left(  z_pe^{-2piy}  \right) \right] dy -\theta +\varepsilon I_0\\
&& \quad \overset{\rm def}{=} B_0(v_0,\left\lbrace z_p\right\rbrace ) -\theta+\varepsilon I_0 \\
\dot z_k+z_k&=& \varepsilon_k\frac{\sqrt{|J_k|}}{\pi}\int\limits_{-\frac{\pi}{2}}^\frac{\pi}{2}S\left[ \lambda v_0+\lambda\suml_{p=1}^N\sqrt{|J_p|}\Re\left(  z_pe^{-2piy}  \right) \right] e^{2kiy}dy+\varepsilon I_k\\
&& \quad \overset{\rm def}{=} B_k(v_0,\left\lbrace z_p\right\rbrace )+\varepsilon I_k \quad k=1,\cdots,N
\end{array}\right.
\end{equation}
% written 
% \begin{equation}\label{RM:complex}
%  \left\lbrace 
% \begin{array}{l}
%  \dot v_0+v_0= A(v_0,\left\lbrace z_p\right\rbrace ) -\theta +\varepsilon I_0\\
% \dot z_k+z_k= B_k(v_0,\left\lbrace z_p\right\rbrace )+\varepsilon I_k\\
% \end{array}\right. 
% \end{equation}
where $I(x)\overset{\rm def}{=}I_0+\suml_{k=1}^NI_k\sqrt{|J_k|}e^{2ikx}$ and $I_0\in\mathbb R,\ I_k\in\mathbb C$.

The coefficients defining the tuning curves satisfy the following equations:
\begin{equation*}
 \left\lbrace 
\begin{array}{l}
 v_0= B_0(v_0,\left\lbrace z_p\right\rbrace ) -\theta +\varepsilon I_0\\
z_k= B_k(v_0,\left\lbrace z_p\right\rbrace )+\varepsilon I_k\quad k=1,\cdots,N\\
\end{array}\right. 
\end{equation*}
The $N+1$ dimensional vector $(v_0,z_1,\cdots,z_N)$ is a representation of $V^\parallel$.
The group $G$ also acts on this representation as follows
\[
 \begin{array}{lcl}
 	T_\gamma\cdot(v_0,z_1,z_2,\cdots,z_N)&=&(v_0,e^{2i\gamma}z_1,e^{4i\gamma}z_2,\cdots,e^{2iN\gamma}z_N)\quad \gamma \in \R\\
 	R\cdot(v_0,z_1,z_2,\cdots,z_N)&=&(v_0,\bar z_1,\bar z_2,\cdots,\bar z_N)
 \end{array}
\]
\SOS{Olivier: ça ne veut rien dire que de dire qu'un groupe qui est défini de manière intrinsèque dépend de $N$. J'ai reformulé.}
% The action of the group $G$ depends upon the value of $N$. \SOS{Olivier: ce n'est pas si ``easy'' que ça. Est-ce que ce n'est pas plutôt  pour $N=1$ ? Pour $N=2$, peut-on préciser ?}
% For $N=1$, the action is that of $G=\mathbb R/\pi \mathbb Z$ whereas for $N=2$, the action is is that of 
% 
% It is easy to see that for
% $N=1$, then $G=\mathbb R/\mathbb Z$ while for $N=2$, $G=O(2)$. 
As shown in the introduction, the use of the group $O(2)$ is motivated by the fact that when $\varepsilon=0$, then \eqref{eq:discrete} are $O(2)$-equivariant\footnote{If we write \eqref{eq:discrete} when $\varepsilon=0$ as $\frac{d}{dt}V^\parallel=F(V^\parallel)$, this means that $\forall g\in O(2),\forall V^\parallel$, $F(g\cdot V^\parallel)=g\cdot F(V^\parallel)$}. Since if $V^f$ is a stationary solution of \eqref{eq:discrete} for $\varepsilon=0$, so is $g\cdot V^f,\ \forall g\in O(2)$ there is an infinity of tuning curves that are encoded by the network. However, when $\varepsilon\prod\limits_{k=1}^N I_k\neq 0$, all the symmetries are broken, \eqref{eq:discrete} are not $O(2)$-equivariant anymore and the number of tuning curves becomes finite.

In the next two sections we study the cases $N=1$ and $N=2$. The second case shows that adding more modes does not change the main results of the analysis.
\subsection*{Keeping only one mode in $J$}
We consider the following connectivity function
\[
 J=\varepsilon_0+J_1\cos_2,\,J_1>0
\]
% \SOS{Olivier: ce serait peut-être le moment de dire que $J_1>0$! Peut-etre mais il faut le justifier comme par exemple dire qu'on se rapproche de Ben-Yshai et al}.\\
From our previous analysis of the symmetries of the Ring Model, we know that the equations \eqref{eq:discrete} are redundant when $\varepsilon=0$. In order to eliminate this redundancy they should be rewritten using their equivariant structure with respect to the action of the group $O(2)$. In this case, this turns out to be equivalent to writing an equation for $v_0$ and the magnitude $\rho$ of $z_1$. It is convenient to write $z_1 = v_1+iv_2=\rho e^{2i\varphi}$, which yields to the following equations, assuming $x_0=0$:
\begin{equation}
\label{eq:onemode}
\left\{
    \begin{array}{lcl}
       \dot v_0 &=& -v_0+\varepsilon_0 B_0(v_0,\rho)-\theta+\varepsilon(1-\beta)\\
       \dot\rho &=& -\rho+J_1B_1(v_0,\rho)+ \frac{\varepsilon\beta}{\sqrt{J_1}}\cos_2(\varphi)\\
       2\rho\dot\varphi &=&-\sin_2(\varphi)\frac{\varepsilon\beta}{\sqrt{J_1}}
       \end{array}
\right.
\end{equation}
for the dynamics, and
\begin{equation}
\label{eq:onemodestatic}
\left\{
    \begin{array}{lcl}
       v_0 &=& \varepsilon_0 B_0(v_0,\rho)-\theta+\varepsilon(1-\beta)\\
      \rho &=& \sqrt{J_1}B_1(v_0,\rho)+ \frac{\varepsilon\beta}{\sqrt{J_1}}\cos_2(\varphi)\\
       0&=&\sin_2(\varphi)\frac{\varepsilon\beta}{\sqrt{J_1}}
       \end{array}
\right.
\end{equation}
for the tuning curves.
The functions $B_0$ and $B_1$ are given by (using an integration by parts to factor out $\rho$ in $B_1(v_0,\rho)$)
% \SOS{Olivier: j'ai remplacé le $S'_0$ par $S'$ dans la 2ème équation}
\[ 
    \begin{array}{lcl}
     B_0(v_0,\rho)&=&\frac{1}{\pi}\int\limits_{-\frac{\pi}{2}}^\frac{\pi}{2}S(\lambda(v_0+\sqrt{J_1}\rho\ \cos_2 x ))\,\frac{dx}{\pi}\\
     B_1(v_0,\rho)&=&\frac{1}{\pi}\sqrt{J_1}\lambda\rho\int\limits_{-\frac{\pi}{2}}^\frac{\pi}{2}S'(\lambda(v_0+\sqrt{J_1}\rho\ \cos_2 x ))\sin_2^2 x\,\frac{dx}{\pi}
    \end{array}
\]
Equations \eqref{eq:onemode} do not produce the same dynamics as \eqref{eq:discrete} because the change from Cartesian to polar cordinates is not a diffeomorphism. Nevertheless equations \eqref{eq:onemodestatic} are most useful for computing the tuning curves.
%  because the phase $\varphi$ is decoupled from the magnitude $\rho$ when the contrast $\varepsilon$ is equal to 0.

\subsection*{Case of 2 modes, $N=2$}

We now write:
\[
 J=\varepsilon_0+J_1\cos_2+J_2\cos_4
\]
 In order to agree with known experimental facts, the tuning curves should be mainly unimodal. Compared to the previous case, the fact that the second mode is nonzero could induce an ``interaction'' between the two modes leading to multimodal tuning curves.

Following the analysis of section \ref{section:finite} we have to solve five coupled equations which are redundant because of the action of the group $G$ which in this case reads
\[ \left\lbrace 
    \begin{array}{lcl}
       T_\gamma\cdot(v_0,z_1,z_2)&=&(v_0,e^{2i\gamma}z_1,e^{4i\gamma}z_2)\\
       R\cdot(v_0,z_1,z_2)&=&(v_0,\bar z_1,\bar z_2)
       \end{array}\right.
\]
In order to eliminate the redundancy arising from this symmetry we used polar coordinates as in the case $N=1$. It is tempting to do the same with the two complex variables $z_1$ and $z_2$ but it turns out to be a dead end.\\
% \SOS{Olivier: Romain, à toi de mettre un peu de substance.->OK}

The main reason is numerical: we compute (see \textit{Supporting information}, second paragraph) the solutions of the nonlinear equations \eqref{eq:discrete} using numerical continuation. This scheme works well if the Jacobian of the nonlinear equation has at worst a one-dimensional kernel at some isolated points. If we write $z_1=\rho_1e^{2i\varphi_1},\ z_2=\rho_2e^{4i\varphi_2}$, then, using the invariance by $T_\gamma$, the equations for $\dot\varphi_i=0$, $i=1,2$ simplify to $0=F_i(v_0,\rho_1,\rho_2,\varphi_1-\varphi_2)$ which are functions of the phase difference $\varphi_1-\varphi_2$. It turns out that the other equations (for $v_0,\rho_1,\rho_2$) also involve only $\varphi_1-\varphi_2$. We were unable to find a simple relation between $F_1$ and $F_2$. As a consequnce  we end up with 5 equations in the 4 unknows $v_0,\rho_1,\rho_2,\varphi_1-\varphi_2$: this is unappropriate for numerical continuation and we need to find a way to obtain 4 equations in 4 unknowns.

\

To reach this goal we turn to a general technique,  the Orbit Space Reduction \cite{chossat-lauterbach:00}, which provides the right change of coordinates through the use of what is called a Hilbert Basis\footnote{The ring $\mathcal R_G$ of $O(2)$-invariant polynomials is finitely generated as an $\mathbb R$-algebra, this goes back to Hilbert. A family $\pi_1,\cdots,\pi_s$ of generators of $\mathcal R_G$ is called a Hilbert Basis.}. A fundamental property is that any smooth equivariant function can be expressed using the elements of a Hilbert Basis and their gradients.

% \SOS{Olivier: qu'est-ce qu'elle a de spécial cette base ? quel est l'espace de Hilbert dont on parle ?->OK}.\\

% Let us, first, find an equivalent of the polynomials $v_0,v_1^2+v_2^2$, which were fundamental in the use of the polar basis ; this is given by what is called a Hilbert basis.

A Hilbert basis associated to the action of the group $O(2)$ (\cite[page 205]{chossat-lauterbach:00}) is given by the $O(2)$-invariant polynomials:
\[
 \pi_1 = z_1\bar z_1,\quad\pi_2 = z_2\bar z_2,\quad\quad\pi_3 = \Re\left( z_1^2\bar z_2\right) 
\]
 which must satisfy the constraints
\begin{equation}\label{eq:OSinequalities}
 \pi_1\geq 0,\quad\pi_2\geq 0,\quad\pi_3^2\leq\pi_1^2\pi_2
\end{equation}
Our analysis is now focused on the so-called Orbit Space, i.e. the subset of $\R^4$ of the four-tuples $(v_0,\vec\pi)$, $\vec\pi=(\pi_1,\pi_2,\pi_3)$, that satisfy the previous inequalities.

As the fonction $B_0(v_0,z_1,z_2)$ is $O(2)$-invariant (i.e. $B_0(g \cdot (v_0,z_1,z_2))=B_0(v_0,z_1,z_2)$ for all $g$ in $G$), it is a function, noted $\tilde B_0(v_0,\vec\pi)$, of the variables $(v_0,\vec\pi)$, i.e. $B_0(v_0,z_1,z_2)=\tilde B_0(v_0,\vec\pi)$. Furthermore since  the pair $(B_1,B_2)$ is $O(2)$-equivariant (i.e. $g \cdot (B_1,B_2)(v_0,z_1,z_2)=(B_1(g \cdot (v_0,z_1,z_2)),B_2(g \cdot (v_0,z_1,z_2)))$ for all $g$ in $G$), it can be written:
\begin{equation} \left\lbrace \label{RM:inv}
    \begin{array}{l}
       -z_1+B_1(v_0,z_1,z_2) = a(v_0,\vec\pi)z_1+b(v_0,\vec\pi)\bar z_1 z_2\\
       -z_2+B_2(v_0,z_1,z_2) = c(v_0,\vec\pi)z_2+d(v_0,\vec\pi)z_1^2
       \end{array}\right.
\end{equation}
where $a(v_0,\vec\pi),b(v_0,\vec\pi),c(v_0,\vec\pi),d(v_0,\vec\pi)$ are $O(2)$-invariant functions. Notice that this implies that $B_1(v_0,0,z_2)=0$.

Using the definition of the polynomials $\pi_i$, it is possible to rewrite (\ref{eq:discrete}) only in terms of $(v_0,\vec\pi)$ (as we did in the previous section with the polar coordinates):
\begin{equation}\label{RM:OSpace}
\left\lbrace 
    \begin{array}{l}
\dot v_0=-v_0+ \tilde B_0(v_0,\vec\pi) -\theta\\
 \dot\pi_1=2a(v_0,\vec\pi)\pi_1+2b(v_0,\vec\pi)\pi_3\\
      \dot\pi_2=2c(v_0,\vec\pi)\pi_2+2d(v_0,\vec\pi)\pi_3\\
      \dot\pi_3=\left[ 2a(v_0,\vec\pi)+c(v_0,\vec\pi)\right] \pi_3+2c(v_0,\vec\pi)\pi_1\pi_2+d(v_0,\vec\pi)\pi_1^2
       \end{array}\right.
\end{equation}
These equations are solved to find the tuning curves. 
%  \SOS{Oliver: 12:08, j'en suis là}

% Notice that we can find closed forms for the invariants $\tilde B_0,a,b,c,d$ using the Hadamard lemma and multidimensional integrals \SOS{Olivier: explique ou vire}.

% Results and Discussion can be combined.
\section*{Results}
\SOS{The results section should provide details of all of the experiments that are required to support the conclusions of the paper. There is no specific word limit for this section, but details of experiments that are peripheral to the main thrust of the article and that detract from the focus of the article should not be included. The section may be divided into subsections, each with a concise subheading. Large datasets, including raw data, should be submitted as supporting files; these are published online alongside the accepted article. The results section should be written in the past tense}
We used the equations of the previous paragraph to find the tuning curves in the cases $N=1$ and $N=2$. Some of these tuning curves corresponded to a neuronal illusion. We then showed that adding more modes to the connectivity function did not change the results. Finally we designed two different types of external stimuli for bringing the network to the illusory states.
\subsection*{Finding the tuning curves, case $N=1$}
\label{section:seuil}

% The tuning curves encoded in the network are stationary solutions of (\ref{}) :
%  
% \[ 
% \left\{
%     	\begin{array}{l}
%     	v_0^f=\varepsilon_0A(v_0^f,\rho^f)+\varepsilon(1-\beta) -\theta+\frac{\varepsilon_0}{2}  \\
%     	\rho^f=\sqrt{J_1}B(v_0^f,\rho^f)+ \frac{\varepsilon\beta}{\sqrt{J_1}}cos_2(\gamma^f)\\
%     	2\gamma^f = k\pi,\ k\in\mathbb Z
% 	\end{array}
% \right.
% \]
\SOS{Ceci ne correspond a la definition d'une tuning curve}
The Ring Model is based on one main ingredient: at null contrast and for small values of the nonlinear gain $\lambda$, there is a unique stationary solution, which is not tuned! Indeed, this stationary solution has to satisfy $\rho^f=0$ otherwise it would not be unique because of the group $O(2)$ equivariance. Thus, in order to produce tuning curves (that are tuned by definition), we need a solution to \eqref{eq:onemodestatic} satisfying $\rho^f\neq 0$. This means that we must investigate for which values of $\lambda$, if any, the $\rho$ solution of \eqref{eq:onemodestatic} bifurcates. For no  external input they read:
\begin{equation}\label{eq:onemodebif}
\left\{
    	\begin{array}{lcl}
    	v_0^f&=&\varepsilon_0 B_0(v_0^f,\rho^f)-\theta \\
    	\rho^f&=&\sqrt{J_1}B_1(v_0,\rho^f)\\
    	&&\varphi^f \in\mathbb R
	\end{array}
\right.
\end{equation}
A bifurcated solution arises when
\[
1=\sqrt{J_1}\partial_\rho B_1(v_0,\rho)_{|\rho=0}=\lambda\frac{J_1}{\pi}\int\limits_{-\frac{\pi}{2}}^\frac{\pi}{2}S'(\lambda(v_0+0\cdot\sqrt{J_1}\cos_2 ))\sin_2^2=\lambda\frac{J_1}{2}S'(\lambda v_0).
\]
It follows that the equations for the existence of a tuned stationary solution, satisfying $\rho^f\neq 0$ are:
\begin{equation}\label{eq:seuil}
 \left\{
    	\begin{array}{lcl}
    	v_0^f&=&\varepsilon_0S(\lambda v_0^f)-\theta \\
    	1&=&\lambda S'(\lambda v_0^f) \frac{J_1}{2}
	\end{array}
\right.
\end{equation}
Using the relation $S'=S(1-S)$ it is straightforward to show that $\lambda$ and $J_1$ must satisfy the condition $\lambda J_1 \geq 8$. In (see \textit{Supporting information}, third section), we find other equivalent conditions that are used to produce the graphs shown 
in figure \ref{fig:condtc}. These graphs show that the threshold $\theta$ and the ratio excitation/inhibition are constrained in order to produce the tuning curves.
% \SOS{Olivier: nécessaire mais pas suffisant, rien ne nous dit que dans les régions du plan $(a,b)$ nos équations ont des solutions.-> OK}
\begin{figure}[htbp]
\centering
          \includegraphics[width=8cm,height=7cm]{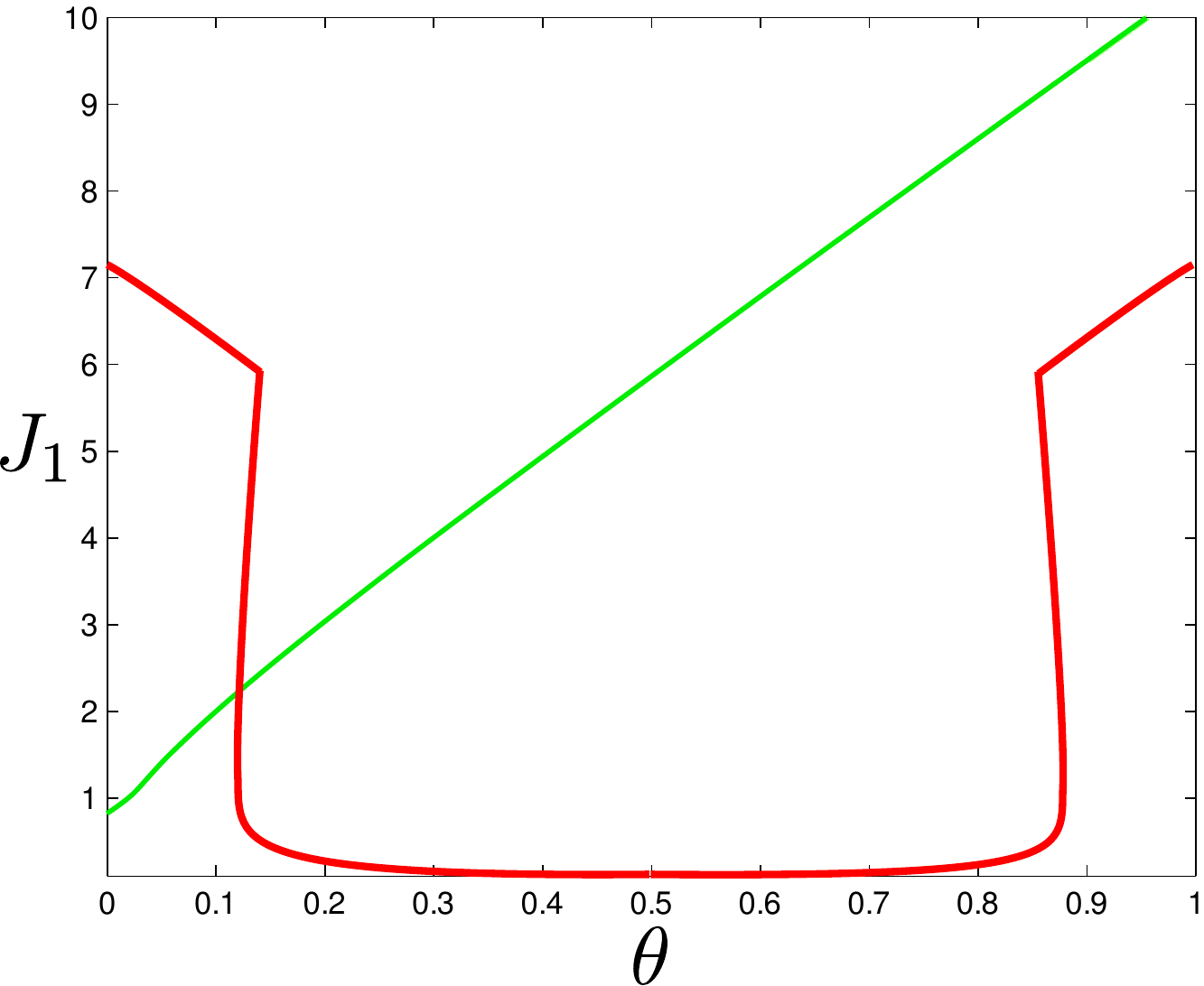}
          \caption{Boundaries in the $(J_1,\,\theta)$ plane for the existence of the tuning curves for $\varepsilon_0=-1$ (Green, it is very close to $J_1=10\theta+1$), and $\varepsilon_0=1$ (Red). The domain of existence lies above these boundaries.}
\label{fig:condtc}
\end{figure}
When these conditions are satisfied we obtain a continuum of tuning curves parametrized by the phase angle $\varphi$, noted $TC_\varphi$, which are given by
 \[
  TC_\varphi(x)=S\left[\lambda \left(v_0^f+\lambda\sqrt{J_1}\rho^f\cos_2(x-\varphi) -\theta\right)\right] 
 \]

Note that these tuning curves are dynamically stable because they are produced by a pitchfork bifurcation, as can be seen by examing equation \eqref{eq:onemodebif}.  The bifurcated branch of interest is the one corresponding to $\rho^f\geq0$.
% \SOS{Olivier: comment sait-on que c'est une pitchfork ? ->OK}

The next step is to investigate what happens when we switch on the LGN drive, \textit{i.e.} when $\varepsilon\neq 0$.
% \SOS{Romain : rajout de symmetry breaking}
First, when the anisotropy $\beta$ of the LGN input is not zero, the equations \eqref{eq:discrete} are not $O(2)$-equivariant anymore. This is a symmetry breaking and, as mentioned above, there are a finite number of tuning curves. Two important questions are 1) how many of the (continuum of) tuning curves remain solutions and 2) what is their stability?
% \SOS{Romain : ajout}
For very small $\varepsilon\neq 0$, switching on the LGN can be viewed as a perturbation of the nonlinear equations when $\varepsilon=0$, as a consequence, we expect an opening of the pitchfork as we explained in \cite{veltz-faugeras:09}. This is confirmed by figure \ref{fig:rho+-}.
% \SOS{Romain : fin de l'ajout}

We know from our previous analysis that these solutions satisfy:
 
\[ 
\left\{
    	\begin{array}{l}
    	v_0^f=\varepsilon_0B_0(v_0^f,\rho^f)+\varepsilon(1-\beta) -\theta \\
    	\rho^f=\sqrt{J_1}B_1(v_0^f,\rho^f)+ \frac{\varepsilon\beta}{\sqrt{J_1}}\cos_2(\varphi^f)\\
    	2\varphi^f = k\pi,\ k\in\mathbb Z
	\end{array}
\right.
\]
Considering the two cases $k$ even and $k$ odd we obtain:
\[
\begin{array}{ccc}
	\left\{
 	\begin{array}{l}
    	v_0^f=\varepsilon_0B_0(v_0^f,\rho^f_e)+\varepsilon(1-\beta)  -\theta+\frac{\varepsilon_0}{2}  \\
    	\rho^f_e=\sqrt{J_1}B_1(v_0^f,\rho^f_e) +\frac{\varepsilon\beta}{\sqrt{J_1}} \\
    	\varphi^f_e = k\pi,\ k\in\mathbb Z
	\end{array}\right.
	&\text{ or }
	&
	\left\{	
	\begin{array}{l}
    	v_0^f=\varepsilon_0B_0(v_0^f,\rho^f_o)+\varepsilon(1-\beta) -\theta+\frac{\varepsilon_0}{2}  \\
    	\rho^f_o=\sqrt{J_1}B_1(v_0^f,\rho^f_o) -\frac{\varepsilon\beta}{\sqrt{J_1}} \\
    	\varphi^f_o = (2k+1)\frac{\pi}{2},\ k\in\mathbb Z
	\end{array}\right.
\end{array}
\]
Because $\rho\to B_0(v_0 ,\rho)$ is even and $\rho\to B_1(v_0 ,\rho)$ is odd, necessarily $\rho^f_e=-\rho^f_o$. We solve these equations for $(v_0,\,\rho_e)$ as functions of $\lambda$ by using a continuation algorithm described in \cite{veltz-faugeras:09}, the results are shown in figure \ref{fig:rho+-}.
\begin{figure}[htbp]
\centering
          \includegraphics[width=0.5\textwidth]{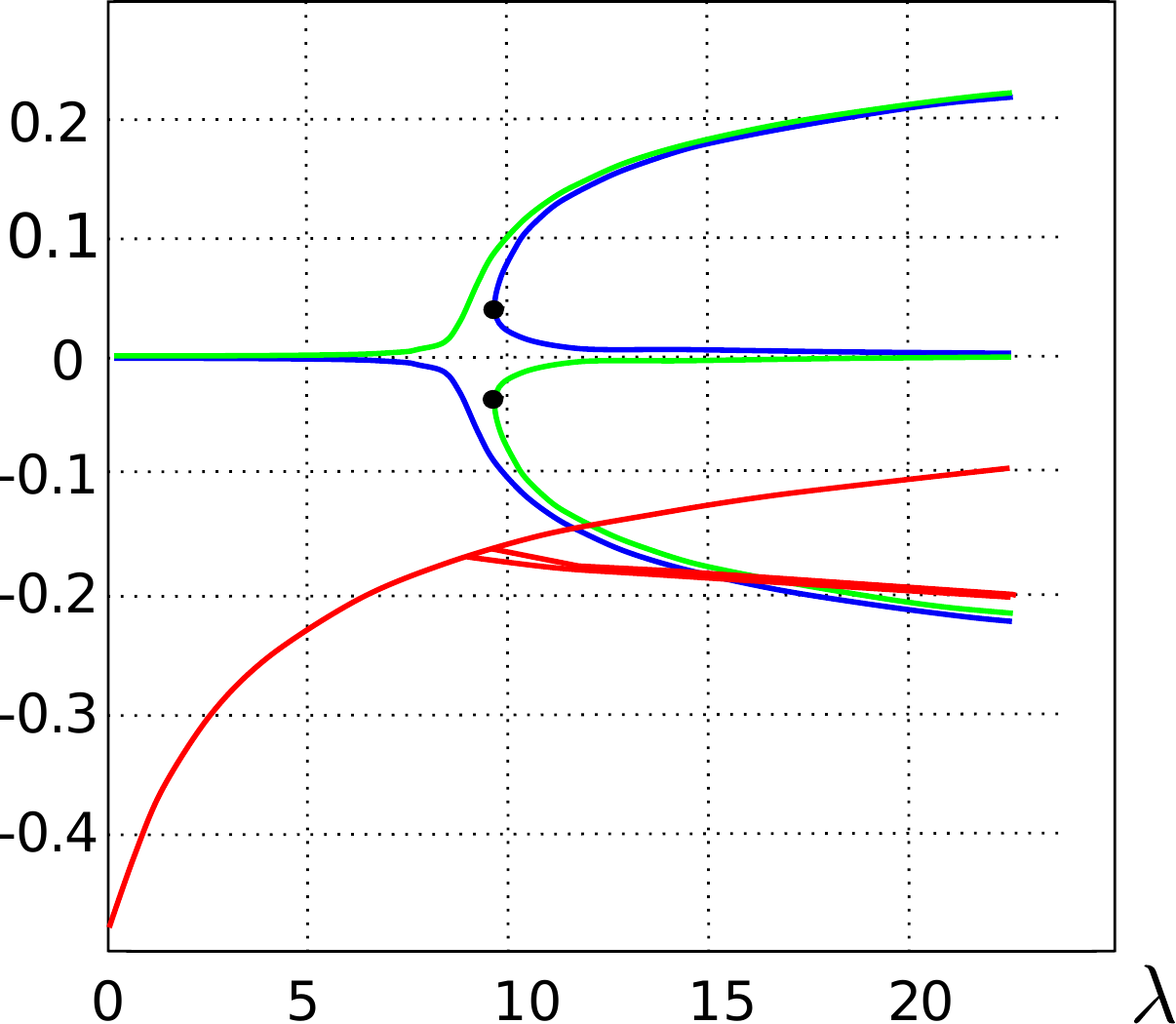}
          \caption{Plot of $(v_0,\rho_e,\rho_o)$ shown in red, green, blue, respectively, as functions of $\lambda$ for $\varepsilon=0.01,\ \theta=0,\ J_1=1.5,\ \beta=0.1$. Notice the turning points labelled with black dots.}
\label{fig:rho+-}
\end{figure}

From this bifurcation diagram, we observe that there are three stationary solutions for $\lambda>10$,  one unstable  corresponding to a small value of $\rho$ (thus it is untuned and, by definition, does not represent a tuning curve) and two others which are tuning curves. One, noted $TC_0$, is peaked at $x=0$ and the other, noted $TC_{\pi/2}$, is peaked at $x=\pi/2$. Note that the values $\lambda>10$ are in agreement with the previously derived necessary condition $\lambda J_1\geq 8$. The two tuning curves are shown in figure \ref{fig:TC}.
\begin{figure}[htbp]
	\centering
	\includegraphics[width=0.5\textwidth]{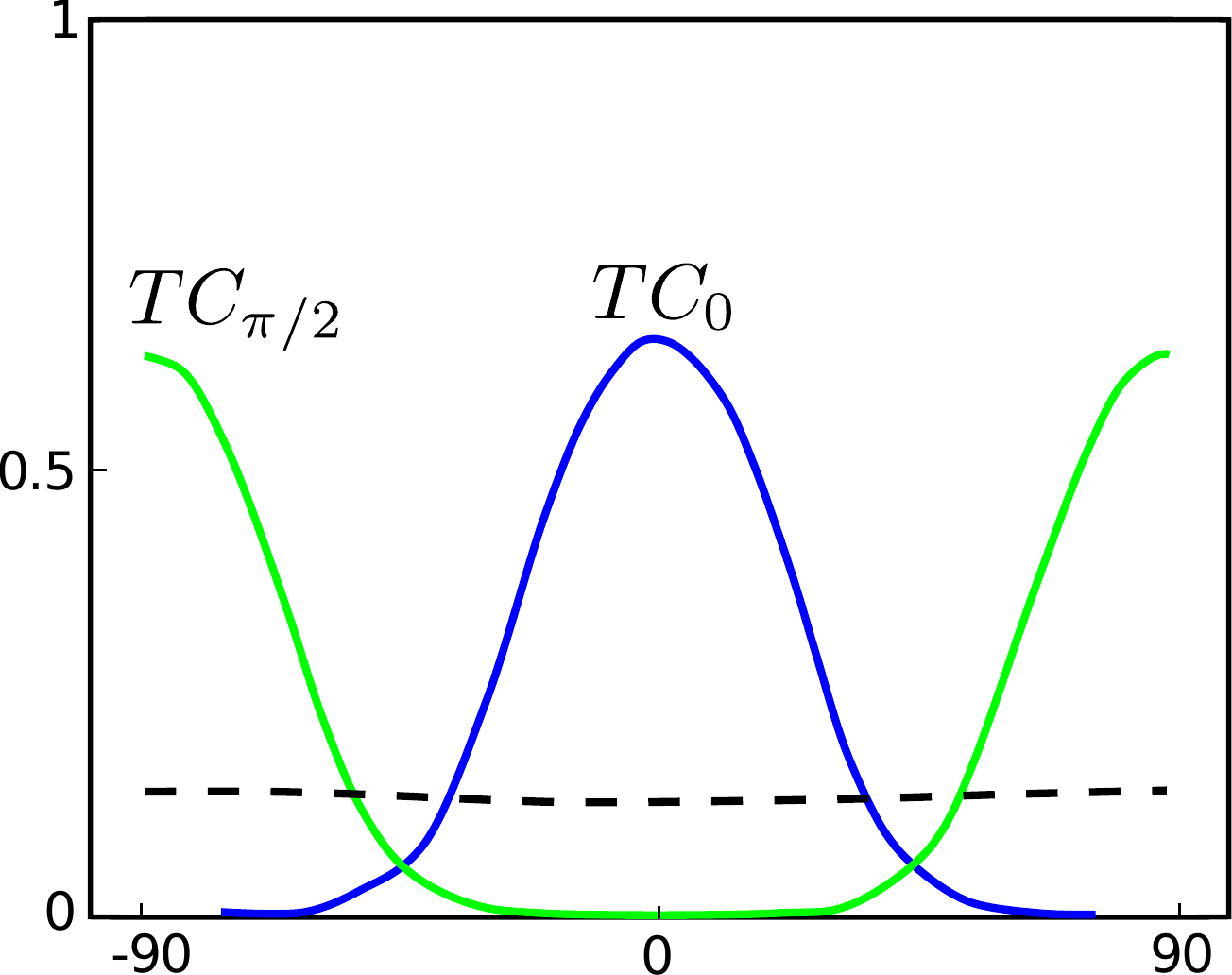}
	\caption{The tuning curves $TC_0$ and $TC_{\pi/2}$ for $\lambda=15$ and $\varepsilon=0.01,\ x_0=0,\ \theta=0,\ J_1=1.5,\ \beta=0.1$.}
	\label{fig:TC}
\end{figure}
The interesting fact to notice is that the tuning curve $TC_{\pi/2}$ is a somewhat bizarre stable state of the Ring Model. We may want to call it a neuronal illusion, the 90 degrees illusion, since it corresponds to the fact that, even if the thalamic input is peaked at the zero degree orientation, the Ring Model may be (and stay) in a stable state corresponding to a tuning curve peaking at 90 degrees! In other words, even if the thalamic input ``says'' 0 degrees, the hypercolumn of orientation ``says'' 90 degrees.
\subsection*{Finding the tuning curves, case $N=2$}
The previous results may seem to depend very much on the type of  simple connectivity function that we have assumed. In fact this is not so. By adding one more mode to this function we show that they are generic if the resulting function preserves the structure of the local excitation and the lateral inhibition.

\

The tuning curves are now solution of the nonlinear equations:
\begin{equation}
\label{eq:tcn2}
\left\lbrace 
    \begin{array}{l}
       v_0=\tilde B_0(v_0,\vec \pi)-\theta\\
       0=2a(v_0,\vec\pi)\pi_1+2b(v_0,\vec\pi)\pi_3\\
      0=2c(v_0,\vec\pi)\pi_2+2d(v_0,\vec\pi)\pi_3\\
      0=\left[ 2a(v_0,\vec\pi)+c(v_0,\vec\pi)\right] \pi_3+2c(v_0,\vec\pi)\pi_1\pi_2+d(v_0,\vec\pi)\pi_1^2
       \end{array}\right.
\end{equation}
We first look at the case where the sigmoid function $S$ is zero at the origin, i.e.  $S_0(x)=S(x)-\frac{1}{2}$. 
\begin{figure}[htbp]
\centerline{
	  \includegraphics[height = 5cm]{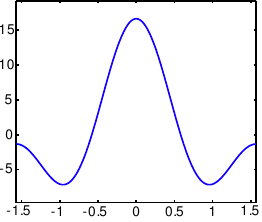}
          \includegraphics[height = 5cm]{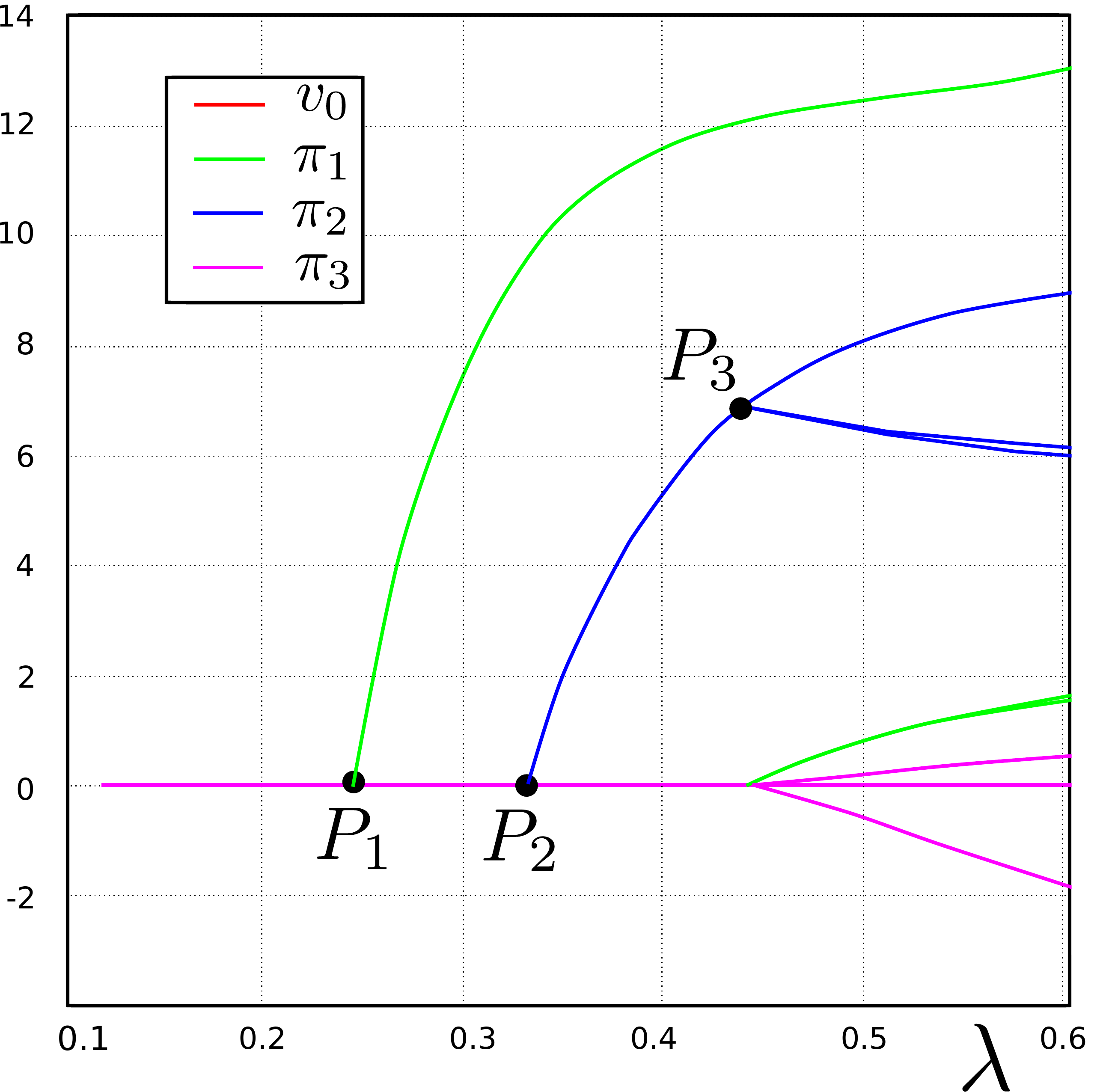}
}
          \caption{Left: Connectivity function used in the example described in the text. Right: Plot of the solutions obtained with $S_0$ instead of $S$ as functions of the nonlinear gain $\lambda$. Each solution is made of a 4-tuple (red, green,blue,violet). The parameter values are $J_0=-1,J_1=9,J_2=6.66,\ \theta=0,\ \varepsilon=0$. The 'wavy' branches are numerical artefacts of the approximation of the centered sigmoid $S_0$ by polynomials. Note that, as expected because we are on the Orbit Space, the values of $\pi_1$ and $\pi_2$ are positive, see text.}
\label{fig:OSmu0}
\end{figure}

We see on the graph of solutions on the Orbit Space shown in figure \ref{fig:OSmu0} that there are two bifurcations\footnote{These are not regular bifurcations because we are working on the Orbit Space.}  from the trivial solution $(v_0,\vec\pi)=0$, at the points, noted  $P_1$ and $P_2$ in this figure, corresponding to the values $\lambda_1 < \lambda_2$ of the nonlinear gain. 
% Note that in principle we would not have needed to use the Orbit Space Reduction to predict these two pitchfork bifurcations. 
Considering again figure \ref{fig:OSmu0} it motivates the following remarks:
 \begin{enumerate}
  \item The first bifurcated branch from $P_1$ reaches high values well before $P_2$.
  \item Our Orbit Space reduction procedure allows us to compute numerically such secondary bifurcation points as $P_3$  which might produce linearly stable tuning curves. These are undesirable from a biological viewpoint because they produce stable multimodal tuning curves.
% \SOS{Olivier : pourquoi ?-> Parce que produit une reponse multimodale stable}.
 \end{enumerate}

The linear stability analysis shows that the branch bifurcating from $P_1$ is stable and corresponds to a continuum of tuning curves parametrized by the phase qngle $\varphi$ and given by:
\[
 \forall\varphi\quad TC^1_\varphi(x)=S_0\left[\lambda \left(v_0^f+\sqrt{\pi_1^fJ_1}\cos_2(x+\varphi)-\theta\right)\right] 
\]
The unstable tuning curves bifurcating from $P_2$ (before $P_3$) are given by: 
\[
 \forall\varphi\quad TC^2_\varphi(x)=S_0\left[\lambda \left(v_0^f+ \sqrt{\pi_2^f|J_2|}\cos_4(x+\varphi)-\theta\right)\right] 
\]
This shows that in order to have unimodal tuning curves (responses of the hypercolumn represented by the Ring Model), it is necessary that $\lambda_1<\lambda_2$ or equivalently $J_2<J_1$ because the nonlinear gains which produce the pitchforks are $S'_0(0)\lambda_i=\frac{2}{J_i}$, $i=1,2$.  Moreover, since $\pi_1^f$ quickly reaches high values, $TC^1_0(0)$ is close to $1$: the response does not depend  upon the contrast $\varepsilon$ of the LGN. \textit{This implies that the working range of the nonlinear gain $\lambda$ is close to the value $\lambda_1$.}

\

It is now possible to understand the diagram of solutions shown in figure \ref{fig:OSmu1} obtained with the regular sigmoid $S$ as a deformation of the diagram shown in figure \ref{fig:OSmu0}. As in the case $N=1$, the bifurcated branches will persist if the coefficients $J_i$, $i=1,2$, and the threshold $\theta$, satisfy the constraints shown in figure \ref{fig:condtc}. Indeed, if we reproduce the analysis in the paragraph \textit{Materials and Methods}, the existence of a pitchfork for the $z_i$, $i=1,2$ coordinate  is given by 
 \[
 \left\{
    	\begin{array}{lcl}
    	v_0^f&=&\varepsilon_0S(\lambda v_0^f)-\theta \\
    	1&=&\lambda S'(\lambda v_0^f) \frac{\textstyle J_i}{\textstyle 2}
	\end{array}
\right.
\]

We again notice that the first branch bifurcating from $P_1$ (in green in figure \ref{fig:OSmu1}) is quickly reaching high values and that the tuning curve is now asymmetric (this is much easier to see in the middle part of figure \ref{fig:TCmu1}). This is because the $\pi_2,\pi_3$ components (in blue and magenta in figure \ref{fig:OSmu1}) are not zero as in the case $N=1$.  The tuning curve corresponding to the first bifurcated branch  is given 
\begin{equation}\label{eq:tuningN2}
 TC(x)=S\left[\lambda \left(v_0^f+ \sqrt{\pi_1^fJ_1}\cos_2(x)+ \underset{small}{\underbrace{ \sqrt{\pi_2^f|J_2|}}}\cos_4(x+\varphi_2-\varphi_1)-\theta\right)\right] 
\end{equation}
where $z_1=\sqrt{\pi_1}e^{2i\varphi_1},\ z_2=\sqrt{\pi_2}e^{4i\varphi_2}$, and $\pi_1\sqrt{\pi_2}\cos_4(\varphi_2-\varphi_1)=\pi_3$.
Remember that there is an infinity of tuning curves that are obtained from the one given by equation \eqref{eq:tuningN2} by applying an element of the group G.

\begin{figure}[htbp]
\centering
          \includegraphics[scale = 0.25]{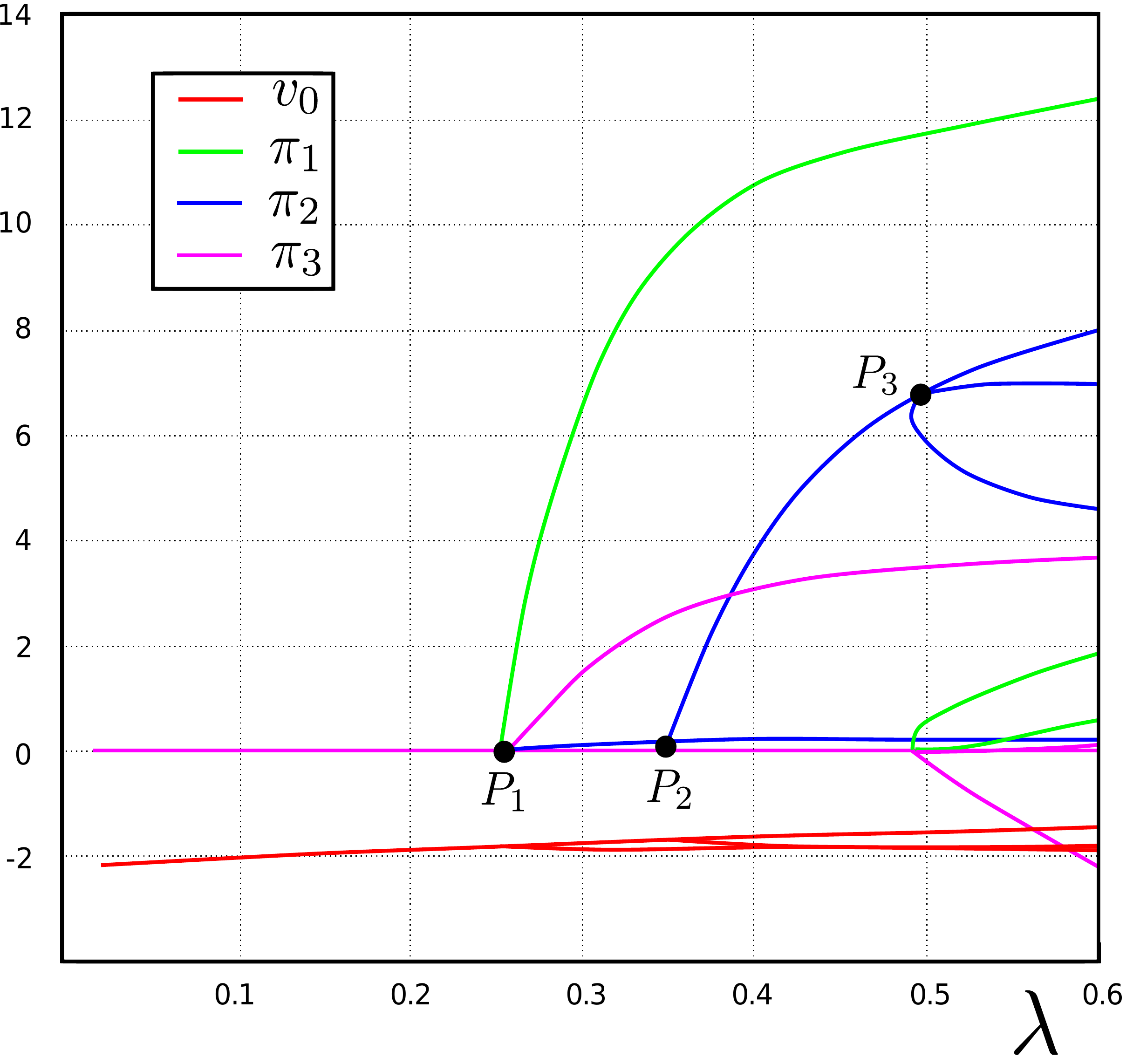}
          \caption{Plot of the solutions with the regular sigmoid $S$ as a function of the nonlinear gain $\lambda$. Case $J_0=-1,J_1=9,J_2=6.66,\ \theta=0\ \varepsilon=0$.}
\label{fig:OSmu1}
\end{figure}
We have plotted in figure \ref{fig:TCmu1} examples of the tuning curves for three values of the nonlinear gain $\lambda$ that are slightly larger than the values corresponding to the three bifurcation points $P_1$, $P_2$ and $P_3$ in figure \ref{fig:OSmu1}. These tuning curves are obtained by reading from figure \ref{fig:OSmu1} the 4-tuple $(v_0,\vec{\pi})$. This yields, through the relation $\pi_1\sqrt{\pi_2}\cos_4(\varphi_2-\varphi_1)=\pi_3$, the value of $\varphi_2-\varphi_1$ that is needed in equation \eqref{eq:tuningN2}. Notice the unstable multimodal tuning curves that appear once the stable tuning curve has saturated (middle plot in figure \ref{fig:TCmu1}). This is an indication that the nonlinear gain should not be too high otherwise most responses of the network will be saturated. 
% \SOS{Olivier : est-ce que les pointillés indiquent la stabilité sur cette figure ? -> Oui, dans le dernier graphe c est plus complique a obtenir et je ne l ai pas donne, on fait quoi ? On remplace tous les pointilles par des continus ?}\SOS{Olivier: OK, laisse comme tel, j'ai ajouté des commentaires.}
% \SOS{Olivier : pourquoi l'échelle  en $\lambda$ est-elle si différente du cas $N=1$ ? : Parce que 2/J1=1 dans le cas J1=1.5 et 2/J1=0.2 dans le cas J1=9} 

\begin{figure}[htbp]
 \centerline{
          \includegraphics[width=0.3\textwidth]{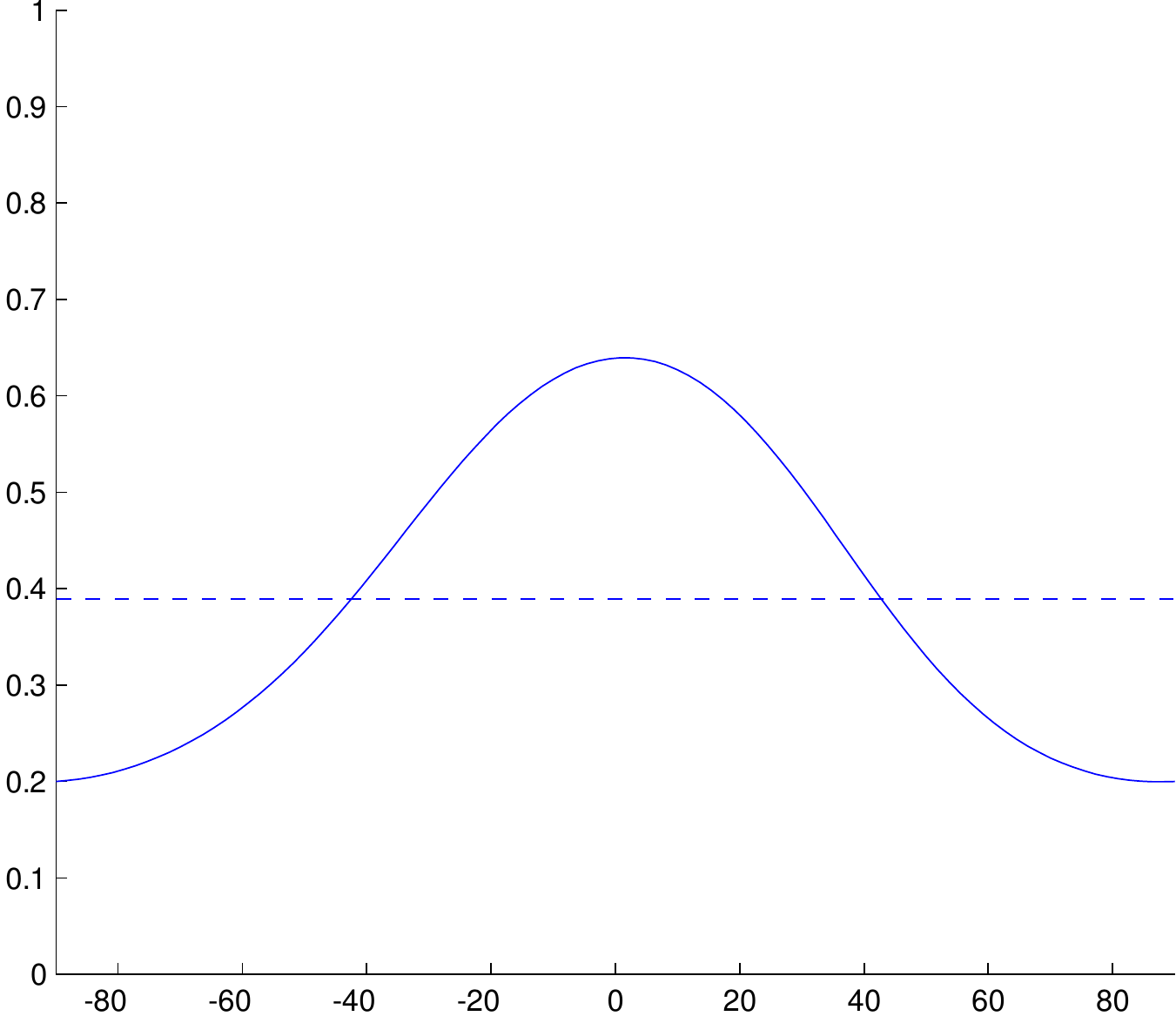}
	  \includegraphics[width=0.3\textwidth]{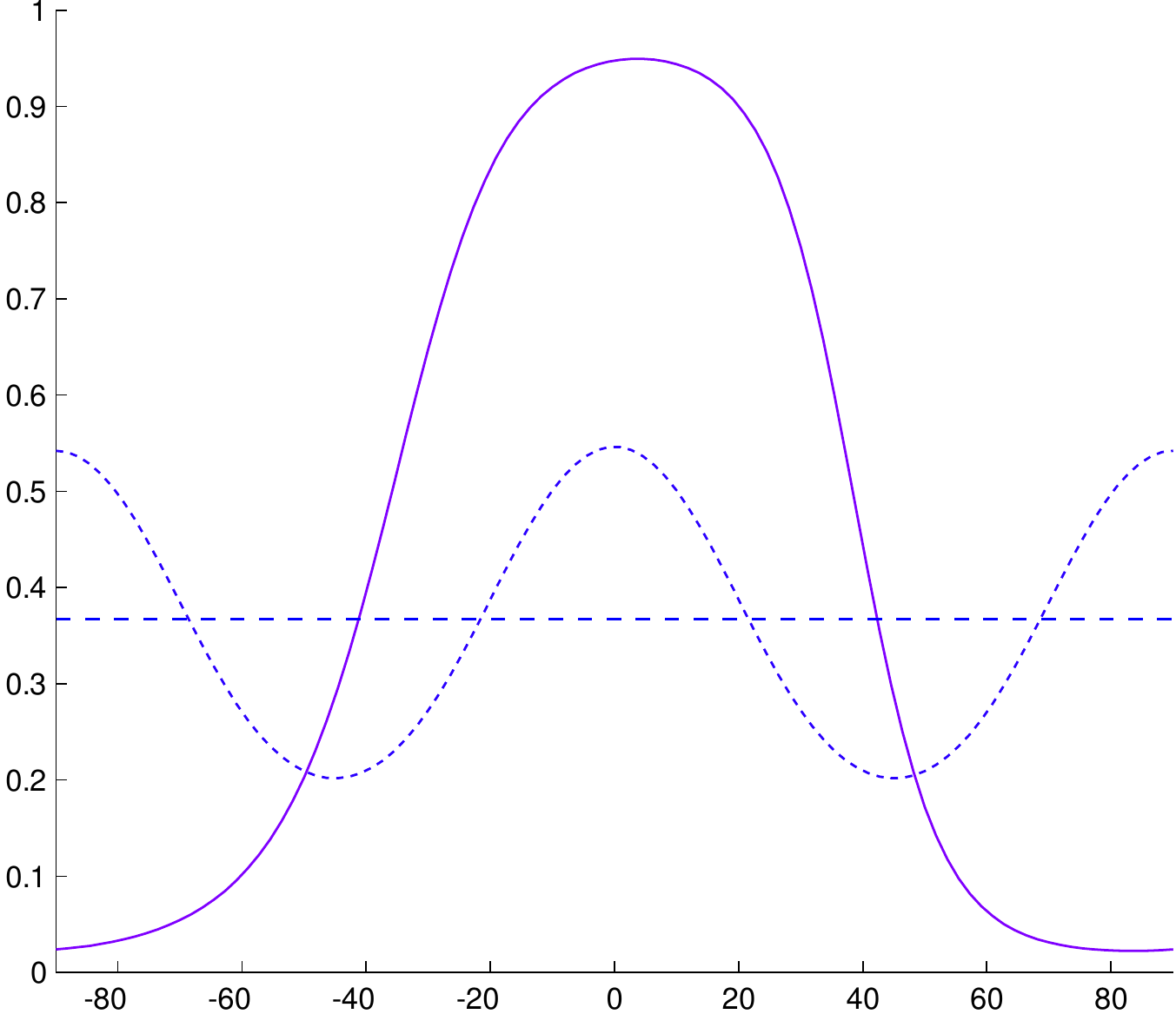}
	  \includegraphics[width=0.3\textwidth]{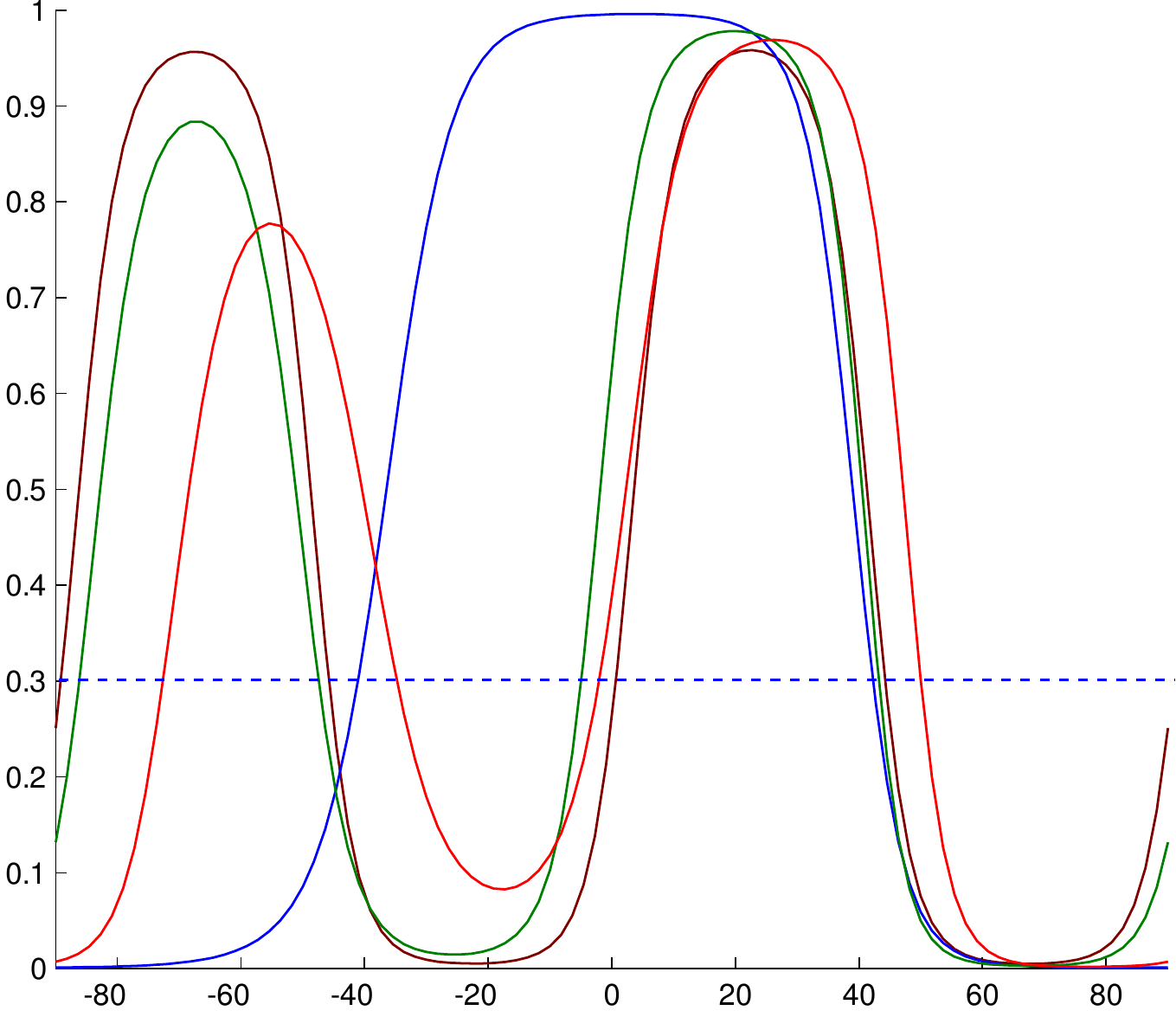}  
}
          \caption{Tuning curves at slopes $\lambda=0.26$ (left),\ $\lambda=0.355$ (middle), and  $\lambda=0.58$ (right) when the input is equal to 0, see text. On the left and in the middle, stable tuning curves are shown in continuous line, unstable ones in dotted lines. Stability is not shown in the plot on the right, except for the null solution. The other parameters are the same as in figure \ref{fig:OSmu1}.}
          \label{fig:TCmu1}
\end{figure}
% \SOS{Olivier: Il faut ajouter un paragraphe, comme dans le cas précédent, sur ce qui se passe quand on allume le LGN.}

If we switch on the LGN, the contrast $\varepsilon$ is nonzero. The external current is given by $I(x)=I_0+I_1\sqrt{J_1}\cos_2+I_2\sqrt{|J_2|}\cos_4$. If $I_1I_2\neq 0$, the symmetries of the equation \eqref{RM:OSpace} are broken and we expect a finite number of solutions. More precisely, the same argument as for $N=1$ shows that we can interpret \eqref{eq:discrete} as a perturbation of \eqref{RM:OSpace} when $\varepsilon$ is small, leading to an opening of the pitchforks. Hence, when the nonlinear gain $\lambda$ is close to that of $P_1$ we have $z_2\approx 0$ for $\varepsilon$ small. Since the equations 
\begin{equation*}\left\lbrace 
\begin{array}{l}
-v_0+B_0(v_0,z_1,z_2)-\theta+\varepsilon I_0=0\\
 -z_1+B_1(v_0,z_1,z_2)+\varepsilon I_1=0
\end{array}\right.
\end{equation*}
are the same as in the case $N=1$ when $z_2=0$, they do not change much when $z_2\approx0$ and the 90 degrees illusion found in the previous section remains: there are two tuning curves, one peaking as the external input $I$ and one translated by 90 degrees. This analysis is confirmed by the results of the numerical computations shown in figure \ref{fig:TCN2} where we show the solutions of \eqref{eq:discrete} for $N=2$, $I_0=1-\beta,\ \sqrt{J_1}I_1=\beta,\ \sqrt{|J_2|}I_2=0.1\beta$.
% \SOS{Olivier : on ne voit pas bien la tuning curve noire. Par rapport à la figure \ref{fig:TC} tu montres $I$. Dans cette même figure, la courbe noire devrait être en pointillé si je ne m'abuse.->OK TC noire corrigee, entree LGN montree car differente des graphes precedents}
\begin{figure}[htbp]
	\centering
	\includegraphics[width=0.3\textwidth]{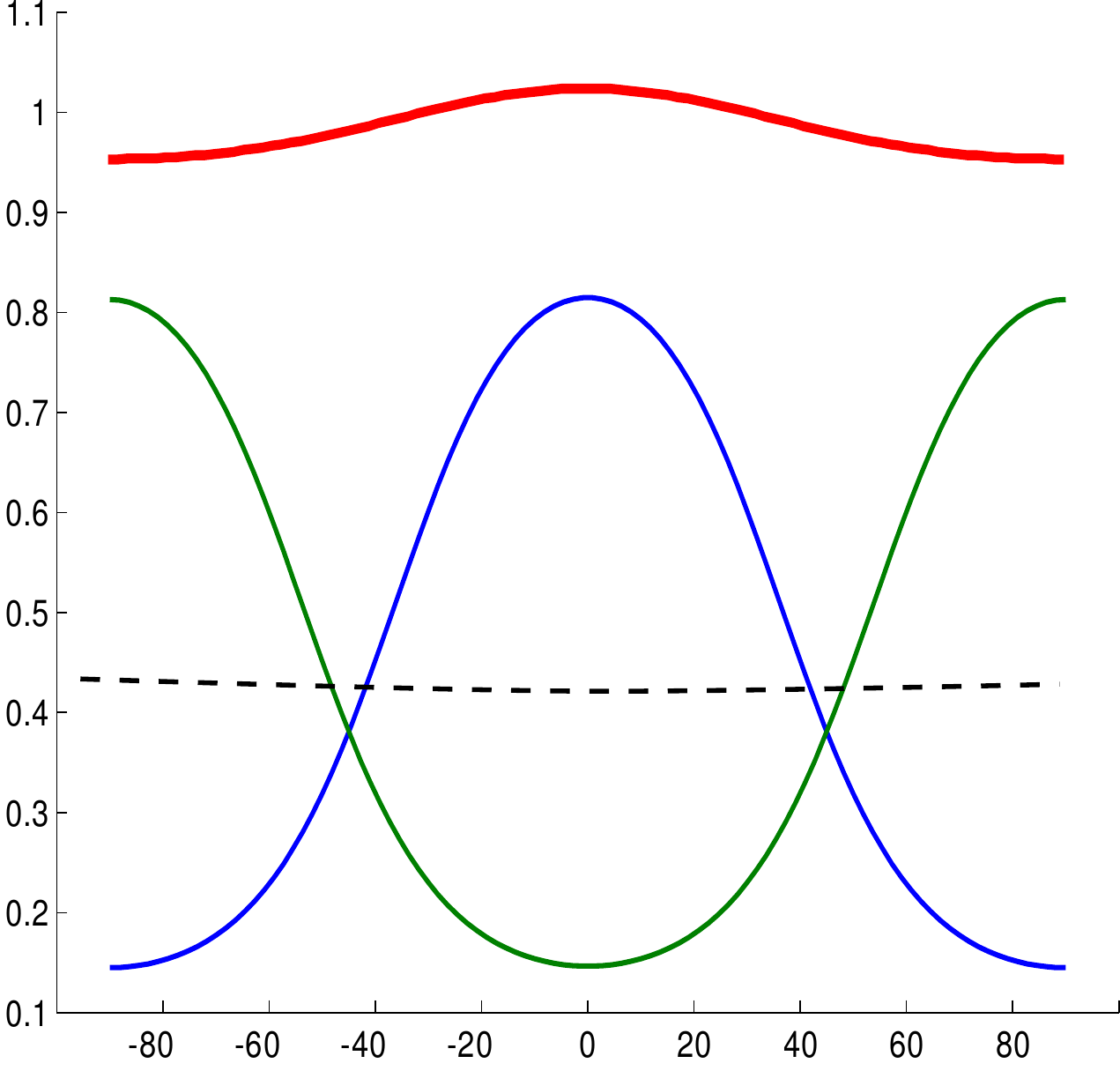}
	\caption{The tuning curves $TC_0$ and $TC_{\pi/2}$ for $\lambda=0.26$, $\varepsilon=0.01,\ x_0=0,\ \theta=0,\ J_1=9,\ J_2=6.66,\ \beta=0.05$ and $I_0=1-\beta,\ \sqrt{J_1}I_1=\beta,\ \sqrt{|J_2|}I_2=0.1\beta$. $I$ is plotted in red. Notice the unstable weakly tuned tuning curve shown in black.}
	\label{fig:TCN2}
\end{figure}
\subsection*{Arbitrary number of modes}
We can perform the same computations using more modes, this will only bring in more tuning curves. 
% \SOS{Olivier : mais aussi des stables, non ?->OK}. 
Because these tuning curves only appear once the stable unimodal tuning curve has saturated these high values for the nonlinear gain $\lambda$ are biologically irrelevant or nonplausible. Notice also that the neuronal illusions found in the case $N=1,\,J_1>0$ are still present for $N > 1,\,J_1>0$, as shown for example in figures \ref{fig:TCmu1} and \ref{fig:TCN2}.
% \SOS{Olivier: c'est là que le paragraphe supplémentaire ci-dessus est utile}. 
Indeed, as seen in the previous section, they only depend upon the fact that the network features a pitchfork bifurcation at the point noted $P_1$ in figures \ref{fig:OSmu0} and \ref{fig:OSmu1} and this is always the case for any value of the number $N$ of modes if the coefficients $J_i$ satisfy the mild constraints we have described previously and we summarize in the next section.
\subsection*{ Dynamical 90 degrees illusions}
In the last paragraphs, we found two cortical representations of the same external stimulus. An immediate question is how can we bring a hypercolumn of orientation into each of these two states? Can we drive the cortical state to the illusion using only the stimulus $I$? We answer this question positively in the next two paragraphs.
\subsubsection*{Rotating the stimulus back and forth}
As the illusory tuning curve $TC_{\pi/2}$ is very close to the cortical state corresponding to the response of the network to a stimulus peaked at $\pi/2$, we first present a stimulus peaked at $0$ to put the system in the $TC_0$ state corresponding to no illusion. We then slowly change the position of the peak of the external stimulus $I$ and bring it to the value $\pi/2$. The network follows the stimulus and its response is peaked at $\pi/2$. We then suddenly change the stimulus to a stimulus peaked at $0$ and since the responses of the network to stimulus oriented at $0$ or $\pi/2$ are very close, the cortical state will remain in the state peaked at $\pi/2$, the one it is in just before the sudden change of the stimulus. This is reminiscent of the after-effect illusion and can be confirmed by numerical simulation.
% \SOS{Olivier: peux-tu donner l'équation de $I(t)$ ?->OK, les valeurs sont en accords avec la figure 5}

The resulting effect is shown in figure \ref{fig:illusion1} when
the time variation of the peak $x_0$ of the stimulus in equation \eqref{eq:I} is given by
\begin{equation*}
 x_0(t)=\left\lbrace 
 \begin{array}{cl}
 \frac{\pi}{2}\min(\frac{t}{1000},1)&\text{ if }t\in[0, 2000]\\
  \frac{\pi}{2}&\text{ if }t\in[2000, 2e4]\\
  0&\text{ if }t>2e4
 \end{array}\right.
\end{equation*}

\begin{figure}[htbp]
\centering
          \includegraphics[width=0.6\textwidth]{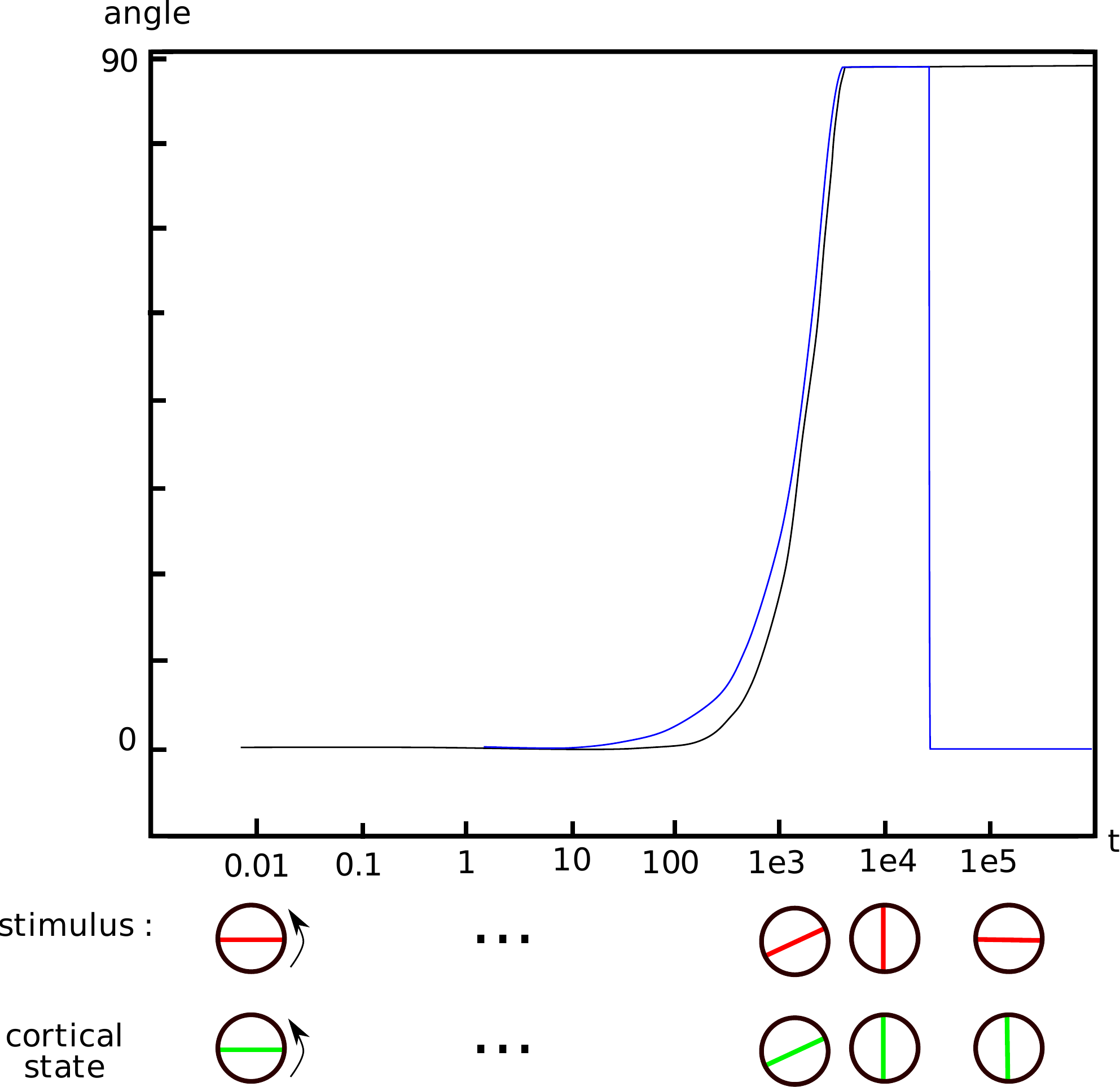}
          \caption{Plot of the position $x_0$ (shown in blue) of the peak of the stimulus $I$ and the phase coordinate $\varphi$ (shown in black) of the network, both as functions of time, on a logarithmic scale. Note that the stimulus drives the network into a state that is very close to its expected state when presented with a stimulus oriented at 90 degrees and that it stays there even after the input is switched back to a 0 orientation stimulus. The parameters are the same as in figures \ref{fig:rho+-} and \ref{fig:TC}.}
\label{fig:illusion1}
\end{figure}

\subsubsection*{Using a mixture of the two stimuli}
This second dynamical 90 degrees illusion is very close in principle to the one presented in the previous paragraph.
% The state $TC_{\pi/2}$ allows to build another dynamical illusion where we force the network to stay in the cortical state $TC_0$ while the external input is progressly peaked at $\pi/2$. 
Instead of rotating the stimulus,  we change its contrast as follows. Let us note $I_0$ the stimulus peaked at $0$ and  $I_{\pi/2}$ the one peaked at ${\pi/2}$, the thalamic input to the hypercolumn of orientation takes the form
\[
 I(t) = (1-\psi(t))I_0+\psi(t)I_{\pi/2}
\]
where $\psi(t)$ is the function shown in figure \ref{fig:contrast}.
\begin{figure}[htbp]
\centering
          \includegraphics[width=0.5\textwidth]{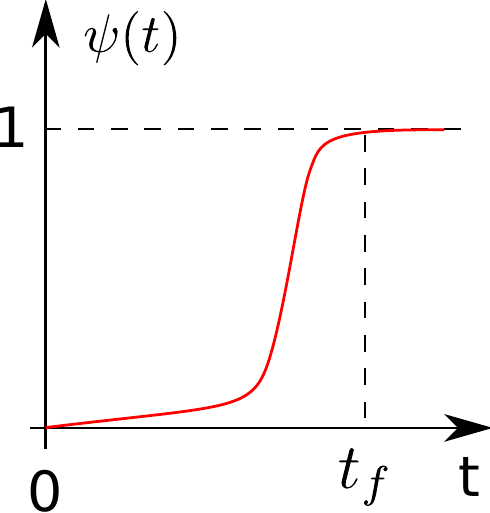}
          \caption{Plot of the function $\psi$ allowing to vary the contrast of the thalamic input, see text.}
\label{fig:contrast}
\end{figure}
We check numerically, using the dynamics given by equations \eqref{eq:onemode}, that the hypercolumn stays in the cortical state $TC_0$ (see figure \ref{fig:ill2}), despite the fact that the final stimulus corresponds to an orientation of 90 degrees.
\begin{figure}[htbp]
\centerline{
          \includegraphics[height=5cm]{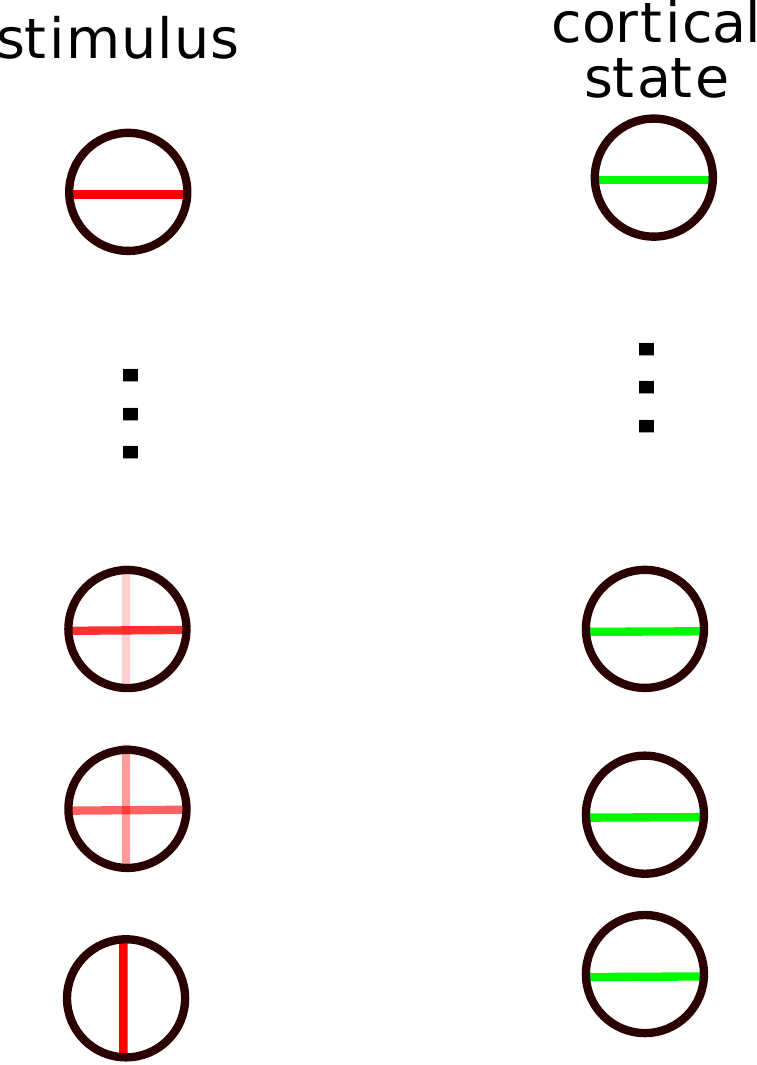}
          \includegraphics[height=5cm]{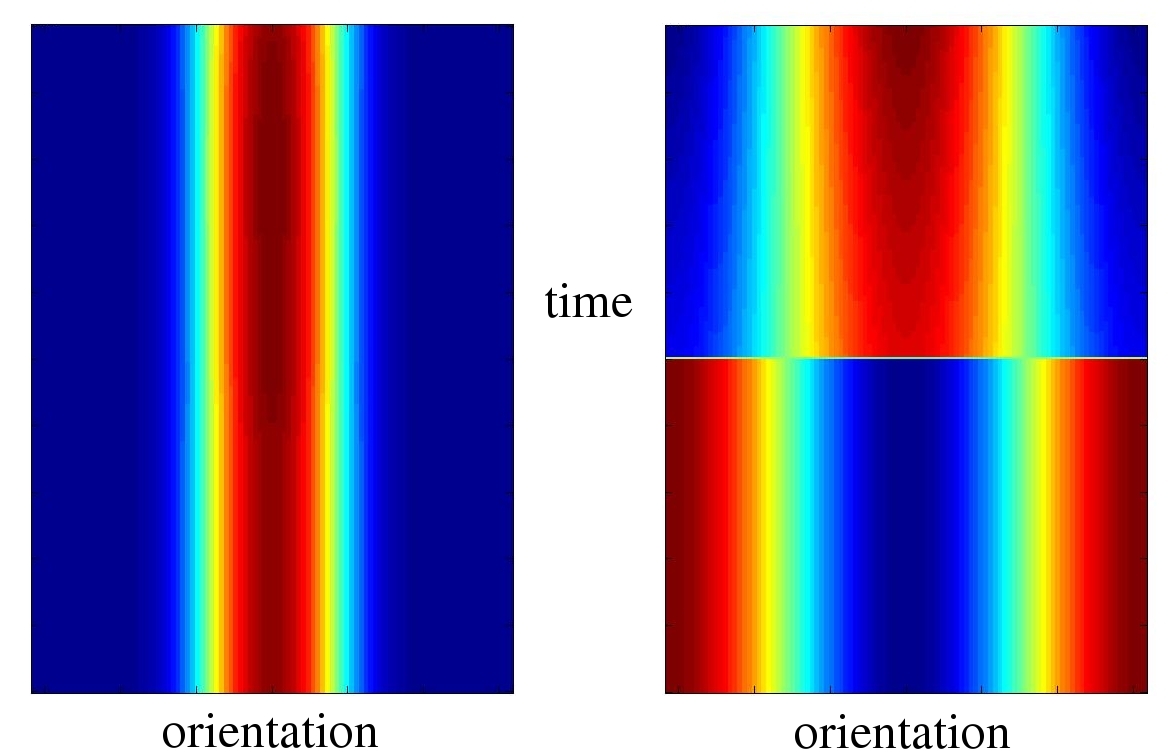}
}
          \caption{The vertical direction represents the time, the horizontal one the orientation of the stimulus and of the network response. Left: Representation of the creation of the 90 degrees illusion. Middle: The stimulus starts as $I_0$ to become a mixture of $I_0$ and $I_{\pi/2}$ and finishes as a pure $I_{\pi/2}$. Right: The response of the network shows that it stays in the cortical state $TC_0$.}
\label{fig:ill2}
\end{figure}

\section*{Discussion}
\label{section:discussion}
We now have a clear view of the functional impact of each parameter in the model. It turns out that the combination of mathematical and biological constrains basically fixes their values.
 We first notice that the requirement for unimodal responses is a very strong constraint. Indeed, it implies that the first pitchfork bifurcation that occurs when $\lambda$ varies has to be the one corresponding to the first mode and this requires
\[
 J_1\geq 0,\quad J_1>J_i\quad\forall i \neq 1
\]
Next,  in order to actually see these tuning curves, the threshold $\theta$ should not be too high: the condition  shown in figure \ref{fig:condtc}  is approximately  $J_1\geq 10\theta+1 $ for $\varepsilon_0=-1$. 
% \SOS{Olivier : pourquoi est-vrai dans le cas $N=2$, voir un SOS ci-dessus ?-> OK, cf p18}\\
This in turn gives the range for the nonlinear gain $\lambda$: it should be high enough in order for the model to produce tuning curves but smaller than the value for which the tuning curves saturate, hence do not vary with the input contrast anymore, in contradiction with the biological measurements. This means that $\lambda\sim\frac{1}{J_1}$.
The last relevant parameter is the width of the tuning curve. As shown in \textit{Supporting information}, last paragraph, it can easily be estimated when $\pi_2$ can be neglected which is often a reasonable assumption, see figure \ref{fig:OSmu1}. It turns out to depend upon the ratio $\frac{\textstyle \theta}{\textstyle J_1}$. This closes the loop: we have set all the three parameters $(\lambda,\,J_1,\,\theta)$ and constrained the others ($J_i,\,i\geq 2$).

% \SOS{Romain ajout}

We now discuss the appearance of tuning curves (\textit{i.e.} stationary solutions such that $z_1\neq 0$, $|z_1|>>|z_i|,\ i>1$). What is the condition on the external input in order to produce such stationary solutions? Let us consider the case $N=1$. We have seen (see figure~\ref{fig:rho+-}) that if a tuning curve exists, then we have three stationary solutions and there is a value $\lambda_c$ of the nonlinear gain $\lambda$ (roughly equal to $10$ in figure~\ref{fig:rho+-}) at which the two tuning curves disappear, we call it a turning point (it results from an opening of the pitchfork when $\varepsilon=0$). When varying $\varepsilon$, we can look for the value of the nonlinear gain $\lambda$ (if there is one) at which a turning point occurs: it is an indication that tuning curves do exist for higher nonlinear gains. The TRILINOS package features the numerical continuation of the locus of the turning point and, starting with the turning point of \ref{fig:rho+-}, it can produce the locus of the turning points in the plane $(\varepsilon,\lambda)$ as shown in figure \ref{fig:TP}.
\begin{figure}[htbp]
\centering
          \includegraphics[width=0.5\textwidth]{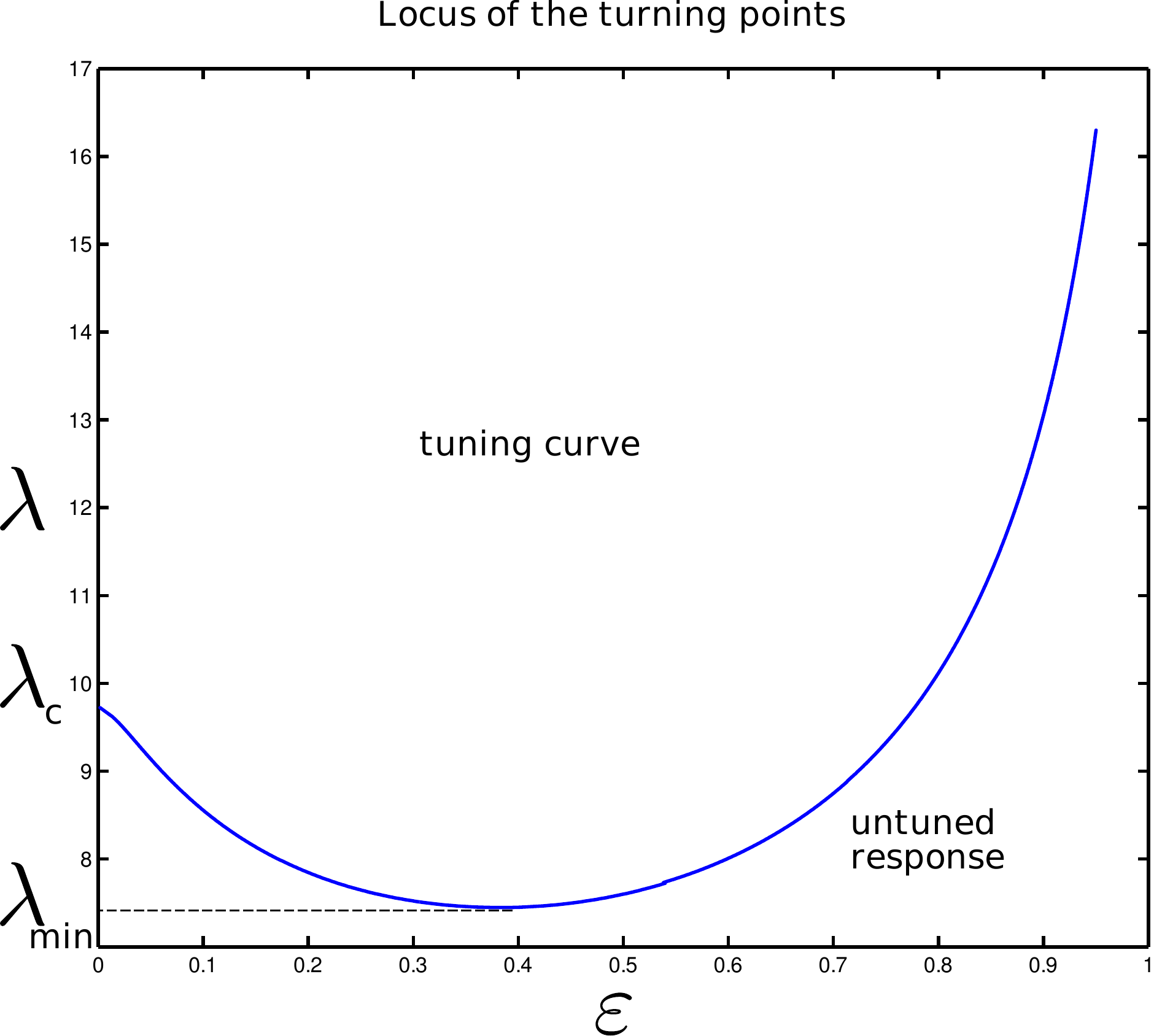}
          \caption{Locus of the turning points as function of $(\varepsilon,\lambda)$. Same parameters as in fig.\ref{fig:rho+-}.}
\label{fig:TP}
\end{figure}
Above the blue curve, the stable response of the network is a tuning curve. Hence, if $\lambda>\lambda_c$, even for no external input, $\varepsilon=0$, the network will be in a stable stationary state $V^f(x)$ which is a tuning curve whose tuning angle $\arg\max_x V^f(x)$ is randomly selected. However, if $\lambda<\lambda_c$ until the contrast $\varepsilon$ has reached a certain value, the response will be untuned. It is more biologically relevant that the network operates in the regime $\lambda<\lambda_c$, otherwise the neurons would have a high firing rate (around $60\%$ of their maximum firing rate, see figure \ref{fig:rho+-}) even though no stimulus is present. Notice that in figures~\ref{fig:rho+-}, \ref{fig:TC}, we are in the case $\lambda>\lambda_c$.
Hence, for the parameters of figure~\ref{fig:rho+-}, the working range of the nonlinear gain is $\lambda\in[\lambda_{min},\lambda_c]$. Notice that the condition on the threshold (see figure~\ref{fig:condtc}) gives the condition for $\lambda_c$ to exist (in the limit $\varepsilon\to 0$, the turning point converges to the pitchfork point). Also, $\varepsilon I_0$ lowers the threshold $\theta$ and taking the differential of \eqref{eq:seuil} w.r.t. $\theta$, it can be shown that $\frac{\partial\lambda_c}{\partial\theta}_{\theta\geq0}<0$ when $\varepsilon_0<0$. Hence we expect that the local behavior around $(0,\lambda_c)$ in figure~\ref{fig:TP} to be quite general in the case $\varepsilon_0<0$. This analysis hence provides a tighter constraint on the nonlinear gain $\lambda$, it should be just below $\lambda_c$: $\lambda\lesssim\lambda_c$.

\

% \SOS{Romain : Olivier, es tu d'accord avec cette remarque ?}
The fact that the cortical network shows two states corresponding to perpendicular orientations in response to a single stimulus can also be put in resonance with some published models of the cortical primary visual area (see \cite{bressloff-cowan-etal:01} for a spatial network of Ring Models). Indeed, in this study of planforms in relation to visual hallucination, it may come as a surprise to the attentive reader that most of the planforms (in the cortical space) do not respect the good continuation principles of contours since adjacent hypercolumns show responses corresponding to orthogonal orientations. However once we agree that, for a hypercolumn, two orthogonal states are equivalent, this becomes perhaps less surprising. 

% \SOS{Lien avec les autres modeles}

We relate our formalism to previous studies of recurrent models of orientation selectivity by first noting that
the 90 degrees illusion was not reported in \cite{ben-yishai-bar-or-etal:95} although they share the same assumptions as ours.

In \cite{carandini-ringach:97}, the authors used a (voltage based) Ring Model in order to explain some of the features of the complicated spiking network of \cite{somers-nelson-etal:95}. Although they used the non-saturating nonlinearity $S(x)=\max(x,0)$, they observed that narrowing the spatial extension of inhibition leads to mulimodal responses which they interpreted as neuronal illusions. This can be understood within our formalism: decreasing the spatial extent of inhibition introduces more Fourier terms (possibly with high values) in the connectivity $J$ and can produce multimodal responses to a unimodal stimulus (see \textit{Finding the tuning curves, case $N = 2$}). This type of nonlinearity (see also \cite{shriki-hansel-etal:03}) cannot produce the 90 degrees illusion we have described, more generally, it is not possible to produce the 90 degrees illusions using non-saturating nonlinearities. Remember that the saturation arises when one takes into account the refractory period of neurons (see \cite{gerstner-kistler:02b}), or more simply the fact that the firing rate of neurons is bounded.

Under what conditions do the 90 degrees illusions survive in a network of Ring Models? Can we find similar illusions in more sophisticated networks and which experiments could confirm/invalidate our predictions? We just discussed the matter of a network of Ring Models with the study of \cite{bressloff-cowan-etal:01}. In \cite{blumenfeld-bibitchkov-etal:06}, the authors used a generalization of the Ring Model with a very similar connectivity to explain the spontaneous activity observed in optical imaging recordings. They could not report the 90 degrees illusion because of their use of a non-saturating nonlinearity. If  a detailed mathematical analysis of a modified version of their model to include a saturating rate function were conducted, it is very likely that the 90 degrees illusion would be predicted by the modified model. 
% This study would also give us a handle on how to use optical imaging measurements for testing our model predictions.

Finally, despite its ability to reproduce several experimental facts, the Ring Model lacks some anatomical data support because it does not use realistic cortical circuitry. Recently, M. Shelley \textit{et al.} (see \cite{shelley-mclaughlin:02}) introduced a reduced system of a computationally intensive spiking neuron network model of a hypercolumn with realistic cortical circuitry. Although they do not use a refractory period in their spiking model (hence, their reduced model has a nonsaturating nonlinearity), it could be interesting to look for neuronal illusions predicted by their model using the techniques developed in \cite{veltz-faugeras:09}.

\SOS{Olivier : justement, cette figure ne montre-t-elle pas un mauvais exemple puisque $\lambda=15$ ?->[Je suis d'accord...d'un autre cote, il faut vraiment etre proche de $\lambda_c$ sinon il n'y a pas de sharpening avant une 'grosse' valeur du seuil] D'après ta figure, pour cette valeur tu es dans le mode ``toujours une tuning curve''. D'autre part il semblerait que pour $\lambda < \approx 7.5$, tu ne puisses jamais avoir de TC. Commentaires ?->[c'est lie a la condition sur le seuil, on sait grossierement que $\lambda J1>8$ (cas $J1=1.5$), mais une analyse plus detaillee du lieu de la pitchfork devrait nous donner cette borne mini $\lambda > \approx 7.5$, d'ailleurs je pense que  la fig.12 est assez proche au travail sur le seuil, cf. la discussion de cet aprem sur le sujet.]}

\SOS{Tronquer apporte des illusions, cf rmRingach.mw}

\SOS{Ancienne conclusion :}
\section*{Conclusion}

We have pushed further the study, started in \cite{veltz-faugeras:09}, of the mathematical properties of the Ring Model of orientation tuning and of some of their biological implications. This was achieved by taking into consideration the rich symmetries of the network. 

For the first time to our knowledge in the field of neural networks, we have introduced the Orbit Space Reduction technique to deal with translation invariant connectivity kernels. This allowed us to find a suitable change of coordinates in order to remove the redundancy introduced by the symmetries. This is a generic technique that can be applied to many other problems in neuroscience. 
% In the case of a single mode in the connectivity, this technique is equivalent to a change from cartesian to polar coordinates. 
Using this reduction, we have shown that the exact shape of the connectivity function did not matter much as long as the first mode $\cos_2$ was the first to bifurcate. 

Our numerical continuation scheme has allowed us to discover another tuning curve encoded in the network that represents an orientation that is orthogonal to that of the LGN input. This neural illusion can be thought of as a ghost of the first pitchfork bifurcation that occurs when the sigmoid is centered (taking the value 0 at the origin) and opens up when the bias on the sigmoid is removed.

We have  shown that it was possible to drive a hypercolumn to the illusory state by adding some dynamics to the LGN input: this gave rise to two dynamical illusions, one relying on rotating the stimulus, the other relying on changing its contrast. This is a strong prediction of the model that could possibly be tested experimentally even though this seems difficult given the fact that the Ring Model does not take into account the lateral spatial connectivity that is present in the visual cortex and allows different hypercolumns of orientation to interact with each other, but see the above discussion.

It would be interesting to see if and how the illusions are modified when adding lateral spatial connections in a spatially organized network of Ring Models.

Finally our approach leads to a near complete understanding of the role of each parameter in the Ring Model: the shape of the connectivity function through the weights $J_i$, the threshold $\theta$, and the nonlinear gain $\lambda$.

\section*{Supporting information}
\subsection*{Numerical computation of the invariant functions}
 Let us say a few words about the practical computation of the invariant functions $\tilde B_0,a,b,c,d$. As we are interested in the tuning curves, using the estimates of \cite{veltz-faugeras:09}, we obtain that the $L^2$-norm $\norm{V^f}_2$ of the tuning curve is upperbounded by $14$ for the connectivity function shown in figure \ref{fig:OSmu0}. The relation $\norm{V^f}^2_2=\pi\left(v_0^2+\frac{J_1}{2}\pi_1+\frac{J_2}{2}\pi_2 \right)$ yields the estimation
	:
\[\norm{V^f}_\infty\leq|v_0|+\sqrt{J_1}(|v_1^{(1)}|+|v_1^{(2)}|)+\sqrt{|J_2|}(|v_2^{(1)}|+|v_2^{(2)}|)\overset{\rm Cauchy-Schwarz}{\leq}\frac{3}{\sqrt{\pi}}\norm{V^f}_2\leq 24\]
 for the same values of the parameters.
 The next step is to approximate the sigmoid $S$ by a polynomial $P$ on some interval $[-\alpha,\alpha]$ where the value of $\alpha$ is chosen so that $\lambda\norm{V^f}_\infty\leq \alpha$. As we need to observe the first two pitchfork bifurcations, reached for the values $\lambda_1 < \lambda_2$ of $\lambda$, see remark \ref{rm:numerics}, we need at least $\lambda=\lambda_2$ 
% \SOS{Romain : Pb c est defini apres, echanger l ordre des remarques 3 et 4 ?} 
and, being a little bit conservative, we end up computing the solutions for $\lambda\in[0,0.6]$. This in turn requires $\alpha \approx 14$.
Note that the more accurate the approximation of $S$, the higher the degree of $P$ with the consequence that some numerical instabilities may develop since this implies raising small numbers to high powers.

% \SOS{Olivier: pourquoi cet intervalle ?->OK}.

$P$ is then expressed in the basis of the Chebychev polynomials as $P=\sum_i\alpha_iT_i$. The reason for this is that the Chebychev polynomials having rational coefficients, we can use, for example, the Groebner basis package of the symbolic computation package Maple to express the invariants $B_0,\,B_1,\,B_2$ as functions of $(v_0,\vec\pi)$. For example, we have:

\[
\begin{array}{lcl}
 B_0 &=&\frac{\varepsilon_0}{\pi}\int\limits_{-\frac{\pi}{2}}^\frac{\pi}{2}S\left[ \lambda v_0+\lambda \Re\left( \sqrt{J_1} z_1e^{-2piy} +\sqrt{J_1} z_2e^{-4piy}\right) \right] dy\\
&\approx& \frac{\varepsilon_0}{\pi}\int\limits_{-\frac{\pi}{2}}^\frac{\pi}{2}P\left[ \lambda v_0+\lambda \Re\left( \sqrt{J_1} z_1e^{-2piy} +\sqrt{J_1} z_2e^{-4piy}\right) \right] dy \\
&=&\frac{\varepsilon_0}{\pi}\sum_i\alpha_i\int\limits_{-\frac{\pi}{2}}^\frac{\pi}{2}T_i\left[ \lambda v_0+\lambda \Re\left( \sqrt{J_1} z_1e^{-2piy} +\sqrt{J_1} z_2e^{-4piy}\right) \right] dy\\
&\overset{Maple}{=}&\frac{\varepsilon_0}{\pi}\sum_i\alpha_i\tilde T_i(v_0,\vec\pi)
\end{array}
\]
The computation of $\tilde T_i(v_0,\vec\pi)=\int\limits_{-\frac{\pi}{2}}^\frac{\pi}{2}T_i\left[ \lambda v_0+\lambda \Re\left( \sqrt{J_1} z_1e^{-2piy} +\sqrt{J_1} z_2e^{-4piy}\right) \right] dy$ is done automatically by the Groebner basis package but requires that the coefficients of the polynomial $T_i$ be rational, not real. This justifies the Chebychev approximation. For $\alpha=14$ and an approximation error of $0.01$ ($\norm{S-P}_{\infty,[-14,14]}<0.01$), it gives a polynomials $P$ of degree $19$.
% \SOS{Olivier: c'est quoi l'ordre?->OK}.
One important advantage of this method is that it does not require the vector $(v_0,\vec\pi)$ to be on the Orbit Space to do the computations, i.e. we can compute $\tilde B_0,a,b,c,d$ even for values of $(v_0,\vec\pi)$ that make no sense, e.g. $\pi_1<0$,  and then project the result on the Orbit Space. 
Note that the method is coherent since the results shown in figures~\ref{fig:OSmu0} and \ref{fig:OSmu1} obtained by numerical continuation do satisfy $\lambda\norm{V^f}_\infty<14$, that is, are consistent with the numerical approximation.

\subsection*{Numerical computation of the solutions of the nonlinear equations in the case $N=2$}\label{rm:numerics}
 In order to solve the nonlinear equations \eqref{eq:tcn2} for the tuning curves, we apply the strategy of \cite{veltz-faugeras:09}. The idea is to use a homotopy to solve the problem: we introduce a new parameter $\mu$ which translates $S$, i.e. $S_{(\mu)}\stackrel{def}{=}S_0+\mu(-\theta+ S(0))$ where $\theta$ is the threshold. Thus $S_{(0)}=S_0$ and $S_{(1)}=S-\theta$. This way we change the nonlinearity in \eqref{eq:discrete} in order to find the TCs analytically (notice that this translation only affects the first equation of \eqref{eq:discrete}). Indeed, when the nonlinearity is the centered sigmoid $S_0$ we obtain the trivial solution $V^f=0$ and  we can also compute the values of the nonlinear gain $\lambda$ where the pitchfork bifurcations occur. We can then numerically continue this trivial solution with respect to the parameters $(\lambda,\,\mu)$ to find the solutions of the equations with the ``correct'' nonlinearity, namely $S-\theta$. We then simply take a slice of the output of the continuation program for $\mu=1$ and obtain the dependency of the solutions w.r.t. the nonlinear gain $\lambda$. This approach, though numerically intensive, is very convenient because it automatically gives the bifurcated branches. It also allows to compute some non-connected branches of solutions. This strategy relies on the library TRILINOS, see the acknowledgements below.
\subsection*{Equivalent condition for the threshold}\label{ap:seuil}
Remember that our goal is to find a region in the plane $(\theta,J_1)$ where there exists a pair $(\lambda>0,\,v_0^f)$ such that :
 \[
 (E):\left\{
    	\begin{array}{lcl}
    	v_0^f&=&\varepsilon_0S(\lambda v_0^f)-\theta \\
    	1&=&\lambda S'(\lambda v_0^f) \frac{J_1}{2}
	\end{array}
\right.
\]
We work out the case $\varepsilon_0=1$, the other one being very similar. As $S'=S(1-S)$, the second equation (E.2) becomes : $1=\frac{\lambda J_1}{2}S(1-S)\overset{{\rm using} (E.1)}{=}\frac{\lambda J_1}{2}U(-1-U)$ where $U\overset{\rm def}{=}v_0^f+\theta$. This quadratic equation in $U$ has real solutions if and only if $\lambda J_1\geq 8$, and they are given by 
\[
 U_\pm=\frac{-1\pm\sqrt{1-\frac{8}{\lambda J_1}}}{2}
\]
We still have to verify that (E.1) is satisfied for at least one of these solutions. This yields an equivalent condition to (E) but with 3 variables instead of 4. 

For example, for $U_+$, we obtain the equation in $(\lambda,\,J_1,\,\theta)$ : $U_+=-S(\lambda U_+-\lambda\theta)$. Using brute force computation for $\lambda\in[0,30],\,\theta\in[0,1],\,J_1\in[0,10]$, we check when this is possible thereby obtaining the graphs shown in figure~\ref{fig:condtc}.
	
\subsection*{The width of the tuning curves}\label{ap:width}
\begin{lemma}
If $f(a,b)\overset{\rm def}{=}-{\frac {a+\ln  \left( 1+2\,{{\rm e}^{-a-b}} \right) }{b}}$, then the width at half height of the tuning curve is equal to 
 $\cos^{-1}\left[f(\lambda (v_0^f-\theta),\lambda \sqrt{\pi_1^fJ_1})\right]$
\end{lemma}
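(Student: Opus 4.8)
The plan is to specialize the tuning curve \eqref{eq:tuningN2} to the regime where $\pi_2$ is negligible, in which it collapses to the single-mode profile $TC(x)=S\bigl[\,a+b\cos_2(x)\,\bigr]$ with the shorthands $a:=\lambda(v_0^f-\theta)$ and $b:=\lambda\sqrt{\pi_1^f J_1}$. Since $\pi_1^f\ge 0$ and $J_1>0$ we have $b\ge 0$, and as $S$ is strictly increasing the profile is $\pi$-periodic, even, and unimodal on $[-\pi/2,\pi/2]$ with peak $TC(0)=S(a+b)$ at $x=0$ and trough $TC(\pm\pi/2)=S(a-b)$. I read \emph{width at half height} as the full width between the two points, placed symmetrically about the peak, at which $TC$ attains one half of its maximum, namely $\tfrac12 S(a+b)$.

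First I would locate the half-height points by solving $S\bigl[a+b\cos_2(x)\bigr]=\tfrac12 S(a+b)$ for $\cos_2(x)$. Setting $u:=a+b\cos_2(x)$ and inserting the explicit sigmoid $S(u)=1/(1+e^{-u})$ turns this into $1+e^{-u}=2\bigl(1+e^{-(a+b)}\bigr)$, hence $e^{-u}=1+2e^{-(a+b)}$ and $u=-\ln\bigl(1+2e^{-(a+b)}\bigr)$. Re-substituting $u=a+b\cos_2(x)$ and solving for the cosine gives exactly $\cos_2(x)=-\dfrac{a+\ln(1+2e^{-a-b})}{b}=f(a,b)$, i.e. $f$ evaluated at $(a,b)=\bigl(\lambda(v_0^f-\theta),\,\lambda\sqrt{\pi_1^f J_1}\bigr)$. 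This inversion is the whole computational content of the lemma and is short and routine.

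To finish, evenness of $TC$ places the two half-height points at $\pm x_{1/2}$ with $\cos_2(x_{1/2})=\cos(2x_{1/2})=f(a,b)$, so $2x_{1/2}=\cos^{-1}f(a,b)$ and the full width between them is $2x_{1/2}=\cos^{-1}\!\bigl[f(\lambda(v_0^f-\theta),\lambda\sqrt{\pi_1^f J_1})\bigr]$, as claimed.

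Two items must be checked for the statement to be meaningful, and I would record them explicitly. The $\cos^{-1}$ requires $f(a,b)\in[-1,1]$. The upper bound $f(a,b)<1$ always holds: the map $s\mapsto s+\ln(1+2e^{-s})$ has derivative $1/(1+2e^{-s})>0$ and tends to $\ln 2>0$ as $s\to-\infty$, hence is strictly positive throughout, which is equivalent to $f(a,b)<1$. The lower bound $f(a,b)\ge -1$ rearranges to $S(a-b)\le\tfrac12 S(a+b)$, i.e. the trough lies below half of the peak; this is exactly the nondegeneracy condition under which a half-height width exists at all, so I would carry it as a standing hypothesis. The only genuinely modelling step, and the one I expect a careful reader to scrutinize rather than the inversion of $S$, is the reduction to a single mode: it is what pins the peak to $x=0$ and discards the $\cos_4$ term, and its justification rests on the ``$\pi_2$ small'' observation already made in connection with figure~\ref{fig:OSmu1}.
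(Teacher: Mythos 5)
Your proof is correct and follows essentially the same route as the paper's: set the sigmoid at the half-height point equal to half its peak value, invert $S$ explicitly to get $a+b\cos_2(x)=-\ln\bigl(1+2e^{-a-b}\bigr)$, and solve for $\cos_2(x)=f(a,b)$. The extra observations you record (the condition $f(a,b)\in[-1,1]$ and the neglect of the $\pi_2$ mode) are sensible additions but do not change the argument, which matches the paper's.
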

\begin{proof}
 By definition, we look for the angle $\varphi$ such that :
 
 $S\left[\lambda \left(v_0^f+ \sqrt{\pi_1^fJ_1} -\theta\right)\right] =2S\left[\lambda \left(v_0^f+\sqrt{\pi_1^fJ_1}\cos_2(\varphi)-\theta\right)\right] $.
 Setting $a=\lambda (v_0^f-\theta)$ and $b=\lambda \sqrt{\pi_1^fJ_1}$, it follows that $\cos(2\varphi)=f(a,b)$. Hence the half-width is given by $\varphi=\frac{1}{2}\cos^{-1}f(a,b)$.
\end{proof}

% You may title this section "Methods" or "Models". 
% "Models" is not a valid title for PLoS ONE authors. However, PLoS ONE
% authors may use "Analysis" 

%%%%%%%%%%%%%%%%%%%%%%%%%%%%%%%%%%%%%%%%%%%%%%%%%%%%%
% Do NOT remove this, even if you are not including acknowledgments
\section*{Acknowledgments}
The numerical continuation experiments were done  using the library TRILINOS (see \cite{sala-heroux-etal:04} and the website \textit{http://trilinos.sandia.gov}) using multiparameters continuation. 
We thank M.E. Hendersen for his valuable help in using the library Multifario (part of Trilinos). Much of our work would have proved difficult - if not impossible - without his help.
The authors wish to thank P.Chossat for fruitfull discussions regarding equivariance problems.\\
This work was partially funded by the ERC advanced grant NerVi number 227747.

%\section*{References}
% The bibtex filename
\bibliographystyle{elsarticle-harv}
% {\scriptsize 
% \bibliography{/proj/odyssee/home/texmf/Bibliography/string,/proj/odyssee/home/texmf/Bibliography/odyssee}
% }

\section*{Figure Legends}
%\begin{figure}[!ht]
%\begin{center}
%%\includegraphics[width=4in]{figure_name.2.eps}
%\end{center}
%\caption{
%{\bf Bold the first sentence.}  Rest of figure 2  caption.  Caption 
%should be left justified, as specified by the options to the caption 
%package.
%}
%\label{Figure_label}
%\end{figure}

\section*{Tables}
%\begin{table}[!ht]
%\caption{
%\bf{Table title}}
%\begin{tabular}{|c|c|c|}
%table information
%\end{tabular}
%\begin{flushleft}Table caption
%\end{flushleft}
%\label{tab:label}
% \end{table}

\end{document}